\newcommand{\R}{\mathbb{R}}
\newcommand{\N}{\mathbb{N}}
\newcommand{\eps}{\varepsilon}
\newcommand{\cO}{{\mathcal{O}}}
\newcommand{\xtilde}{{\widetilde{x}}}
\newcommand{\rtilde}{{\widetilde{r}}}
\newcommand{\Loneloc}{{L^1_{\text{loc}}}}
\newcommand{\BMO}{\textnormal{BMO}}
\newcommand{\BLO}{\textnormal{BLO}}
\newcommand{\VMO}{\textnormal{VMO}}
\newcommand{\Liploc}{\textnormal{Lip}_{\textnormal{loc}}}
\DeclareMathOperator*{\essinf}{ess\,inf}
\DeclareMathOperator{\supp}{supp}
\DeclareMathOperator{\rad}{rad}
\DeclareMathOperator{\dist}{dist}
\DeclareMathOperator{\loc}{loc}
\DeclareMathOperator{\glob}{glob}
\DeclareMathOperator{\diam}{diam}
\newtheorem{theorem}{Theorem}[section]
\newtheorem{lemma}[theorem]{Lemma}
\newtheorem{proposition}[theorem]{Proposition}
\newtheorem{corollary}[theorem]{Corollary}
\theoremstyle{definition}
\newtheorem{definition}[theorem]{Definition}
\newtheorem{example}[theorem]{Example}
\newtheorem{remark}[theorem]{Remark}
\numberwithin{equation}{section}
\begin{document}
	
	\title[Fractional Maximal Functions \& VMO]{Fractional maximal functions and mean oscillation on bounded doubling metric measure spaces}
	
	\author[Gibara, Kline]{Ryan Gibara, Josh Kline}
	\thanks{J. K. was partially supported by NSF
		grant \#DMS-2054960. 
		\\
		\\
		{\small MSC (2020): Primary: 42B35, 42B25, 46E36. }}
	
	\date{\today}
	
	\keywords{Fractional maximal function, bounded mean oscillation, vanishing mean oscillation, doubling metric measure space.}
	
	\begin{abstract}
		Let $(X,d,\mu)$ be a doubling metric measure space.
		We consider the behaviour of the fractional maximal function $M^\alpha$ for $0\leq \alpha<Q$, where $Q$ is the doubling dimension, acting on functions of bounded mean oscillation ($\BMO$) and vanishing mean oscillation ($\VMO$). For $\alpha>0$, we additionally assume that the space is bounded. We show that $M^\alpha$ is bounded from $\BMO$ to $\BLO$, a subclass of $\BMO$, and maps $\VMO$ to itself when $\mu$ has the annular decay property. We also show by means of examples that the action of $M^\alpha$ is not continuous on these function spaces.
	\end{abstract}
	
	\maketitle
	
	\section{Introduction}
	
	For a locally integrable function $f$ on $\R^n$, its (uncentered) fractional Hardy-Littlewood maximal function is defined for $x\in \R^n$ and $0\leq \alpha<n$ as 
	$$
	M^\alpha f(x)=\sup_{B\ni x}r^\alpha\fint_{B}\!|f|=\sup_{B\ni x}\frac{r^\alpha}{|B|}\int_{B}\!|f|,
	$$
	where the supremum is taken over all balls $B\subset\R^n$ of radius $r>0$ containing $x$. When $\alpha=0$, we write $M^0=M$, which is the usual (uncentered) Hardy-Littlewood maximal function. Classical objects in analysis, maximal functions have connections to differentiation of the integral, singular integrals, and potential theory. A common variant of $M^\alpha$ is when the balls are replaced with cubes (which, in this paper, will always mean with sides parallel to the coordinate axes). 
	
	An active line of research involves studying the behaviour of maximal functions as they act on various types of functions. Set $p^*=np/(n-\alpha p)$, which is strictly larger than $p$ when $\alpha>0$. It follows from comparison with the Riesz potential that if $0\leq \alpha<n/p$, then $M^\alpha$ is bounded from $L^p(\R^n)$ to $L^{p^*}(\R^n)$ if $1<p<\infty$ and from $L^p(\R^n)$ to $L^{p^*,\infty}(\R^n)$ if $p=1$, see \cite[Chapter V, Theorem 1]{st}. When $\alpha=0$, we recover the well-known Hardy-Littlewood-Wiener theorem. It follows from the fact that $M^\alpha$ commutes with translations that for $0\leq\alpha<n/p$ it is bounded from $W^{1,p}(\R^n)$ to $W^{1,p^*}(\R^n)$, see \cite{kin,ksa} (although these results are shown for the centered analogue of the maximal function, they also holds for $M^\alpha$).

	In this work, we are interested in functions that are neither in $L^p$ nor in $W^{1,p}$, but rather for which we have some information about their mean oscillations over a specified collection of sets, usually cubes or balls. The quintessential such space, first defined in \cite{jn}, is the space of functions of bounded mean oscillation, $\BMO(\R^n)$, the collection of all locally integrable functions $f$ on $\R^n$ for which 
	$$
	\|f\|_{\BMO}=\sup_{Q}\fint_{Q}\!|f-f_Q|<\infty,
	$$
	where $f_Q$ denotes the mean of $f$ over $Q$ and the supremum is taken over all cubes $Q\subset\R^n$. As $\BMO$ functions need not be integrable, it is not necessary that their image under a maximal function be finite at any point; as such, we exclude those for which the maximal function is identically infinite. In \cite{bdvs}, it was shown that $M$, taken with respect to cubes, is bounded on $\BMO(\R^n)$. This was then improved in \cite{ben}, where it was shown that the image of $\BMO(\R^n)$ is contained inside a subclass, $\BLO(\R^n)$, see Definition \ref{def:BLO}. The same proofs also hold for $M$ taken with respect to balls, see \cite{dgy}. 
	
	Introduced in \cite{sa}, the space of functions of vanishing mean oscillation, $\VMO(\R^n)$, is the subspace of $f\in \BMO(\R^n)$ such that
	$$
	\lim_{r\rightarrow 0^+}\sup_{|Q|\leq r}\fint_{Q}\!|f-f_Q|=0,
	$$
	where the supremum is taken over cubes $Q$ of measure at most $r$ (note that in this definition, we could just have well considered cubes of side length at most $r$). Equivalently, Sarason's theorem tells us that $f$ is in $\VMO(\R^n)$ if and only if it can be approximated in the $\BMO(\R^n)$ norm by uniformly continuous $\BMO(\R^n)$ functions. This vanishing mean oscillation condition is a common minimal regularity condition on the coefficients of partial differential equations. It was only recently shown in \cite{shahab} that $M$, taken with respect to cubes, maps $\VMO(\R^n)$ to itself.
	
	For functions only defined on a fixed cube $Q_0\subset\R^n$, the space $\BMO(Q_0)$, as well as its subclass $\BLO(Q_0)$ and its subspace $\VMO(Q_0)$, are defined by restricting to mean oscillations of subcubes $Q\subset Q_0$. Similarly to the global case, it is known that $M$, taken with respect to cubes, is bounded from $\BMO(Q_0)$ to $\BLO(Q_0)$, see \cite{ben}. 
	
	
	The goal of the present paper is to extend the known $\BMO(Q_0)$ and $\VMO(Q_0)$ results to the fractional maximal function. We do so in the even more general setting of a bounded doubling metric measure space. In recent decades, the analysis on metric measure spaces has become a field of active study, showing that many of the fundamental aspects of analysis and first-order calculus can be adequately and fruitfully generalized to spaces with no linear or differentiable structure. When the measure is doubling, see Definition \ref{def:doubling}, much of classical harmonic analysis can be extended to this setting. In particular, an analogue of the Hardy-Littlewood-Weiner theorem, and therefore Lebesgue's differentiation theorem, holds, see \cite[Theorems 1.8 and 2.2]{H}.
	
	In this general setting of a metric measure space $(X,d,\mu)$, the definitions of $\BMO(X)$ and $\BLO(X)$ can be naturally generalized by replacing cubes with balls $B\subset X$. When considering the subspace $\VMO(X)$, however, there is a choice to be made. One could generalize $\VMO(X)$ by looking at balls of vanishing measure; however, unlike, the Euclidean setting, this is not equivalent to considering balls of vanishing radius. We choose the definition in terms of vanishing radii due to its more obvious connection to the fractional maximal function and to continuity properties of functions. Section \ref{sec:VMO} is dedicated to showing some basic properties of $\VMO$ thus defined when $\mu$ is a doubling measure; in fact, we are able to show in Theorem \ref{thm:VMOChar} that with this definition of $\VMO(X)$, an analogue of Sarason's theorem holds. 
	
	The main bulk of the paper is spent in Section \ref{sec:fs} investigating the action of $M^\alpha$ on function spaces. For the $\BMO$ result, see Theorem \ref{theorem:BMO}, we require that the measure be doubling and, if $\alpha>0$, the space be bounded; on the other hand, the $\VMO$ result, see Theorem \ref{theorem:VMO}, requires the additional assumption of an annular decay property on the measure, see Definition \ref{def:ann-decay}. This property is known to be both necessary and sufficient for maximal functions to map Lipschitz functions to Lipschitz functions, see \cite{buck}. The main results of the paper are summarized in the following.
	
	\begin{theorem}
		Let $(X,d,\mu)$ be a doubling metric measure space and $\alpha\geq 0$. When $\alpha>0$, we assume that $(X,d,\mu)$ is bounded with lower mass bound exponent $Q$. If $0\leq\alpha< Q$, then $M^\alpha$ is bounded from $\BMO(X)$ to $\BLO(X)$. Furthermore, if $\mu$ satisfies an annular decay condition, then $M^\alpha$ maps $\VMO(X)$ to itself. 
	\end{theorem}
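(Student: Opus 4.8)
The plan is to reduce both statements to a lower-oscillation estimate and then, for each fixed ball, to separate the contributions of balls comparable to or smaller than it (the local part) from those genuinely larger than it (the global part). Since $\|g\|_{\BMO}\leq 2\|g\|_{\BLO}$ and $\fint_B|g-g_B|\leq 2\fint_B(g-\essinf_B g)$, it suffices in both the $\BMO$ and the $\VMO$ conclusions to control the lower oscillation $\fint_B(M^\alpha f-\essinf_B M^\alpha f)$. Fixing $B=B(x_0,r_0)$ and writing $m=\essinf_B M^\alpha f$, I would split $M^\alpha f=\max(M^\alpha_{\loc}f,M^\alpha_{\glob}f)$ according to whether the competing balls have radius $<2r_0$ or $\geq 2r_0$, and use the pointwise bound $(M^\alpha f-m)\leq (M^\alpha_{\loc}f-m)_+ + (M^\alpha_{\glob}f-m)_+$. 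Throughout, boundedness of $X$ (when $\alpha>0$) enters only to bound the weight $\rad(B')^\alpha\leq\diam(X)^\alpha$, while the condition $\alpha<Q$ with the lower mass bound is what guarantees, via the earlier results, that $M^\alpha f$ is finite a.e.\ and locally integrable whenever it is not identically infinite, so that these estimates are meaningful.

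For the local part, every competing ball $B'$ with $\rad(B')<2r_0$ meeting $B$ lies in a fixed dilate $\widehat B=B(x_0,4r_0)$, on which $\mu(\widehat B)\leq C\mu(B)$. Bounding $\rad(B')^\alpha\leq(2r_0)^\alpha$ and subtracting the admissible lower bound $m\geq(4r_0)^\alpha|f|_{\widehat B}$, I would dominate $(M^\alpha_{\loc}f-m)_+$ pointwise by $(2r_0)^\alpha M_{\widehat B}|h|$, where $h=|f|-|f|_{\widehat B}$ and $M_{\widehat B}$ is the ordinary localized maximal function. The John--Nirenberg inequality, in its doubling-space form, then gives $\fint_{\widehat B}M_{\widehat B}|h|\leq C\|f\|_{\BMO}$; crucially, since $\widehat B$ and $B$ are comparable, this costs only a bounded measure-ratio factor. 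For the $\VMO$ statement the same computation yields the sharper bound $C(2r_0)^\alpha\omega(4r_0)$, where $\omega(s)=\sup_{\rad(B)\leq s}\fint_B|f-f_B|$ is the vanishing-oscillation modulus, which tends to $0$ as $r_0\to 0$.

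For the global part the estimate is on the oscillation rather than the size: given $x,y\in B$ and a competing ball $B'=B(z,\rho)\ni x$ with $\rho\geq 2r_0$, the slightly dilated concentric ball $B''=B(z,\rho+2r_0)\ni y$ is again admissible, and since $\rho\geq 2r_0$ the two balls are comparable, so one obtains $M^\alpha_{\glob}f(x)\leq M^\alpha_{\glob}f(y)+C\|f\|_{\BMO}$ after controlling $\fint_{B'}|f|-\fint_{B''}|f|$ by the mean oscillation of $|f|$ over $B''$. This already closes the $\BMO\to\BLO$ statement. The difficulty for $\VMO$ is that the mean oscillation of $f$ over a large ball $B''$ need not be small, so this last step does not tend to $0$; this is the main obstacle, and it is exactly where the annular decay property is needed.

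To overcome it, I would introduce an intermediate scale $R=\sqrt{r_0\,\diam(X)}$ and split the global balls into intermediate ($2r_0\leq\rho\leq R$) and large ($\rho>R$). For intermediate balls the passage from $B'$ to $B''$ is still comparable and $\rad(B'')\leq 2\rho\leq 2R\to 0$, so the mean-oscillation term is controlled by $\omega(2R)\to 0$. For large balls the comparison term is governed by the measure of the thin annulus $B''\setminus B'$; the annular decay property bounds $\mu(B''\setminus B')/\mu(B'')$ by $C(2r_0/\rho)^\delta\leq C(2r_0/R)^\delta$, while $\fint_{B'}|f|$ grows at most logarithmically, so this contribution is $\lesssim (r_0/R)^\delta\log(1/R)$, which also tends to $0$ for this choice of $R$. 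The scale $R$ is chosen precisely so that $r_0/R\to 0$, making the annuli asymptotically negligible, while $R\to 0$ keeps $\VMO$ smallness available at scale $R$; combining the three pieces gives $\sup_{\rad(B)\leq r_0}\fint_B(M^\alpha f-\essinf_B M^\alpha f)\to 0$, as desired. The recurring technical point, worth isolating as a lemma, is that every comparison is made between balls of comparable radii, so that no unbounded measure-ratio factor is ever incurred.
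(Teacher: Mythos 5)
Your argument is correct and, at its core, follows the same strategy as the paper: split the maximal operator into a local part (competitor balls of radius comparable to or smaller than $\rad(B)$) controlled via $L^p$-boundedness of the maximal function plus John--Nirenberg, and a global part controlled by comparing a near-optimal ball $B'=B(z,\rho)\ni x$ with the concentric dilate $B(z,\rho+2r_0)\ni y$, which is exactly where annular decay and the logarithmic nested-ball comparison (Lemma~\ref{lem:jones}) enter. The organizational differences are worth recording: you reduce both conclusions to the lower oscillation $\fint_B(M^\alpha f-\essinf_B M^\alpha f)$ and prove $\BMO\to\BLO$ by splitting the operator, whereas the paper decomposes the function as $f=(f-f_{2B})\chi_{2B}+h$ (Theorem~\ref{theorem:BMO}); and for $\VMO$ you use a single $r_0$-dependent intermediate scale $R=\sqrt{r_0\,\diam(X)}$, where the paper instead fixes a large multiplicative scale $\lambda$ first (making the global oscillation small uniformly in $B$ via Lemma~\ref{lem:NonLocOsc}) and only then shrinks $\rad(B)$. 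These are equivalent; your choice makes the rate explicit, at the cost of re-deriving the global bound for each $r_0$.

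Two places need tightening. First, in the local estimate the route through $L^p$-boundedness of the maximal function produces $\cO_p(f,\widehat B)$ for some $p>1$, not the $L^1$ modulus $\omega(4r_0)$ you quote. For $\alpha>0$ this is harmless since the prefactor $r_0^\alpha\to 0$ anyway, but for $\alpha=0$ you must know that $f\in\VMO(X)$ implies $\omega_p(f,r)\to 0$, i.e.\ $\VMO(X)=\VMO^p(X)$; this is Corollary~\ref{cor:VMOp-VMO} (alternatively it follows from a localized John--Nirenberg inequality), and it is a genuine ingredient, not bookkeeping. Second, the phrase ``$\fint_{B'}|f|$ grows at most logarithmically'' is not the right statement: averages of $|f|$ are not controlled by $\|f\|_{\BMO(X)}$ at all, since adding constants is free. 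What is true, and what produces your logarithmic factor, is the estimate
$\frac{1}{\mu(B')}\int_{B''\setminus B'}\bigl||f|-|f|_{B''}\bigr|\,d\mu\lesssim (r_0/\rho)^\beta\log(C\rho/r_0)\,\|f\|_{\BMO(X)}$,
obtained by covering the thin annulus with boundedly overlapping balls of radius comparable to $r_0$, bounding $\fint_{2B_i}|f-f_{B''}|\,d\mu$ by $C\log(C\rho/r_0)\|f\|_{\BMO(X)}$ via Lemma~\ref{lem:jones}, and summing the measures using disjointness and the annular decay property. With that substitution your bound $(r_0/R)^\beta\log(CR/r_0)\to 0$ is exactly the one the paper obtains, and the proof closes.
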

	
	As a Euclidean consequence of this theorem, we obtain the following, see Corollary \ref{cor:Euclid}.
	
	\begin{corollary}
		Let $0\leq \alpha<n$ and $f\in\BMO(Q_0)$ for some fixed cube $Q_0\subset \R^n$. Then, $M^\alpha f \in \BLO(Q_0)$ with $\|M^\alpha f\|_{\BLO}\leq C\|f\|_{\BMO}$, where $C$ depends on $n$ and $\alpha$. Moreover, if $f\in\VMO(Q_0)$, then $M^\alpha f\in\VMO(Q_0)$.
	\end{corollary}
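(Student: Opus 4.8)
The plan is to deduce the corollary by applying the two main results (summarized above as Theorems \ref{theorem:BMO} and \ref{theorem:VMO}) to the bounded doubling metric measure space $X = Q_0$, equipped with the restriction of the Euclidean metric and of Lebesgue measure $\mu = \mathcal{L}^n$, and then transferring the conclusions from relative balls of $Q_0$ to subcubes of $Q_0$.

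First I would verify that $(Q_0, |\cdot|, \mathcal{L}^n)$ satisfies all the standing hypotheses. Boundedness is immediate. Since $Q_0$ is a bounded convex set, there is a constant $c_n > 0$ such that $c_n r^n \le \mu(B(x,r)) \le \omega_n r^n$ for every $x \in Q_0$ and every $0 < r \le \diam Q_0$, where $B(x,r) = \{y \in Q_0 : |x - y| < r\}$ is the relative ball; that is, the space is Ahlfors $n$-regular. This yields the doubling property at once and shows that the lower mass bound holds with exponent $Q = n$ (the optimal such exponent for Lebesgue measure), so that the hypothesis $0 \le \alpha < Q$ is precisely $0 \le \alpha < n$. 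The annular decay property follows from the regularity as well: the relative annulus satisfies $\mu(B(x,r) \setminus B(x,(1-\delta)r)) \le (1 - (1-\delta)^n)\omega_n r^n \le n\,\delta\, \omega_n r^n \le C_n\, \delta\, \mu(B(x,r))$ uniformly in $x$ and $r$, which is annular decay with exponent $1$.

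Granting these verifications, Theorem \ref{theorem:BMO} gives that $M^\alpha$ is bounded from $\BMO(Q_0)$ to $\BLO(Q_0)$ and Theorem \ref{theorem:VMO} gives that $M^\alpha$ maps $\VMO(Q_0)$ into itself, where at this stage all three spaces are understood with respect to the relative balls of $Q_0$. The remaining step is to replace balls by cubes. This rests on the standard geometric comparison in $\R^n$: every subcube $Q \subseteq Q_0$ is sandwiched between two relative balls of comparable measure and comparable radius, and conversely every relative ball is sandwiched between two subcubes of comparable measure and comparable side length. From these containments one obtains $\|f\|_{\BMO, \mathrm{cubes}} \approx \|f\|_{\BMO, \mathrm{balls}}$ and $\|f\|_{\BLO, \mathrm{cubes}} \approx \|f\|_{\BLO, \mathrm{balls}}$ via the elementary estimate $\fint_S |f - f_{S'}| \le (\mu(S')/\mu(S)) \fint_{S'} |f - f_{S'}|$ valid for $S \subseteq S'$, while the comparability of radii with side lengths makes the two defining vanishing conditions for $\VMO$ coincide. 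Combining these equivalences with the theorems delivers $\|M^\alpha f\|_{\BLO} \le C(n,\alpha)\,\|f\|_{\BMO}$ together with the mapping $\VMO(Q_0) \to \VMO(Q_0)$.

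I expect the one genuine point of care to be the uniformity of the Ahlfors lower bound and of the ball--cube comparison near $\partial Q_0$: because the underlying space is the cube itself, a relative ball centred close to the boundary is only a fixed fraction of a full Euclidean ball, so one must check that the constant $c_n$ above is independent of the centre. This follows from the uniform interior cone (corkscrew) condition enjoyed by a cube, after which the remaining arguments are routine.
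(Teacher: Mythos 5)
Your proposal is correct and follows essentially the same route as the paper: verify that $(Q_0,|\cdot|,\mathcal{L}^n)$ is a bounded Ahlfors $n$-regular (hence doubling, lower-mass-bounded with $Q=n$, and $1$-annular-decay) space, apply Theorems \ref{theorem:BMO} and \ref{theorem:VMO}, and then pass between relative balls and subcubes via the standard sandwiching and the mean-oscillation comparison for nested sets. The paper's treatment of the ball--cube equivalence is the same in substance, citing the estimate $\mathcal{O}(f,Q(x,\ell)\cap Q_0)\leq (|Q_2|/|Q_1|)\,\mathcal{O}(f,Q_2)\leq 2^n\,\mathcal{O}(f,Q_2)$, so no further comparison is needed.
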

	
	This theorem complements the known $\alpha=0$ results in the literature by extending the $\BMO-\BLO$ result to the fractional maximal function. For the $\VMO-\VMO$ result, however, this is new even for $\alpha=0$, complementing the result of \cite{shahab} for $\VMO(\R^n)$.
	
	In general, for non-linear operators, boundedness does not imply continuity. The most famous example of a non-linear operator that is bounded but not continuous is the symmetric decreasing rearrangement, which is bounded on $W^{1,p}(\R^n)$ for all $1\leq p <\infty$ and any dimension, but is discontinuous unless $n=1$, see \cite{almg,cor}. The same operator is bounded on $\BMO(\R^n)$, see \cite{bdg}, but is not continuous, even if $n=1$, see \cite{BDG2}. As the Hardy-Littlewood maximal function is pointwise sublinear, the continuity of $M^\alpha$ on $L^p(\R^n)$ follows immediately from its boundedness. In Sobolev spaces, it was shown in \cite{lui} that $M$ is continuous from $W^{1,p}(\R^n)$ to $W^{1,p^*}(\R^n)$. 
	The question of the continuity of $M$ on $\BMO(\R^n)$ was recently addressed in \cite{shahab}, where a counterexample was constructed. In Section \ref{sec:cont}, we provide a simpler example when $n=1$, and also show that $M^\alpha$ is discontinuous on $\BMO(X)$ and $\VMO(X)$ when $X$ is an interval in $\R$. 
	
	As an aside, we mention that there is extensive ongoing research concerning the behaviour of maximal functions on Sobolev and related spaces when $p=1$. It is clear that $M$ cannot map $W^{1,1}(\R^n)$ to itself, as the image of a non-zero $L^1(\R^n)$ function under $M$ is necessarily non-integrable. The so-called $W^{1,1}$ conjecture, first posited in \cite{ho}, asks whether the map $f\mapsto |\nabla Mf|$ is bounded from $W^{1,1}(\R^n)$ to $L^1(\R^n)$. This conjecture was answered in the affirmative by \cite{tan} in dimension 1 and recently by \cite{weight} in arbitrary dimensions when considering $M$ with respect to cubes. The literature on the regularity of maximal functions in this endpoint case is vast; see, for instance, \cite{alp, bel, cmp, gk, mad, weight2}.

	\section{Preliminaries} 
	
	In this paper, $(X,d,\mu)$ will always denote a \emph{metric measure space}. That is, $(X,d)$ is a metric space equipped with a Borel measure $\mu$ that satisfies $0<\mu(B)<\infty$ for all balls $B\subset X$ of positive and finite radius. It is immediate that such a measure is $\sigma$-finite, and it also follows that $(X,d)$ is separable, see \cite[Lemma~3.3.30]{HKST} for example. We will assume throughout the paper that balls come with a prescribed center and radius to avoid the ambiguity coming from the fact that they are, in general, not unique for a given ball. The notation $\rad(B)$ will sometimes be used to denote this specified radius. For a constant $C>0$, the notation $CB$ will be used to represent the ball with the same center as $B$ but whose radius equals $C\rad(B)$. We will also assume that $X$ contains more than one point.
	
	Throughout this paper, we let $C$ denote a constant which depends, unless otherwise specified, on the doubling constant (see below). The exact value of $C$ is not of interest here and may vary at each occurrence, even within the same line.  For quantities $A$ and $B$, we use the notation $A\lesssim B$ to mean that there exists such a constant $C>0$ such that $A\le CB$. 
	
	
	\subsection{Doubling measures}
	
	In this subsection, we will define some important classes of measures that will be needed in later results. Prime among these measures are those that exhibit a doubling condition. 
	
	\begin{definition}\label{def:doubling}
		We say that $\mu$ is a \emph{doubling measure} if there exists a constant $C_\mu\geq 1$, called the \emph{doubling constant} of $\mu$, such that 
		$$
		\mu(B(x,2r))\leq C_\mu\,\mu(B(x,r))
		$$
		for all $x\in X$ and $0<r<\infty$. We say that $(X,d,\mu)$ is a \emph{doubling metric measure space} when $(X,d,\mu)$ is a metric measure space and $\mu$ is a doubling measure. 
	\end{definition}
	
	Doubling measures play an important role in the analysis of metric measure spaces. In particular, if a metric space is equipped with a doubling measure, then as a consequence, the Lebesgue differentiation theorem holds \cite[Chapter~1]{H}.  In such metric spaces, one also obtains standard $L^p$ boundedness of the Hardy-Littlewood maximal function and the John-Nirenberg inequality, see for example \cite[Chapter~3]{bb}.
	
	\begin{remark}\label{remark:doubling-constant}
		As we are assuming that $X$ contains more than one point, we have that $C_\mu\geq 2$, see \cite{sd}.
	\end{remark}
	
	\begin{lemma}\cite[Lemma~3.3]{bb}
		A measure $\mu$ is doubling if and only if there exist an exponent $Q\geq 0$ and a constant $0<c\leq 1$ such that
		\begin{equation}\label{RLMB}
			\frac{\mu(B(x,r))}{\mu(B(y,R))}\geq c\left( \frac{r}{R} \right)^Q
		\end{equation}
		for all $y\in X$, $x\in B(y,R)$, and $0<r\leq R<\infty$. 
	\end{lemma}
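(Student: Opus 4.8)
The plan is to prove both implications, of which the reverse direction (the comparison condition \eqref{RLMB} implies doubling) is immediate and the forward direction (doubling implies \eqref{RLMB}) carries all the content. For the reverse direction, I would simply specialize \eqref{RLMB} by taking $y=x$ and $R=2r$; since $x\in B(x,2r)$ trivially, the hypothesis gives
$$
\frac{\mu(B(x,r))}{\mu(B(x,2r))}\geq c\,2^{-Q},
$$
so that $\mu(B(x,2r))\leq c^{-1}2^Q\,\mu(B(x,r))$ for all $x$ and $r$, which is precisely the doubling condition with constant $C_\mu=c^{-1}2^Q$.

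For the forward direction, assume $\mu$ is doubling with constant $C_\mu$, and fix $y\in X$, $x\in B(y,R)$, and $0<r\leq R<\infty$. The first step is a geometric inclusion: since $d(x,y)<R$, the triangle inequality yields $B(y,R)\subset B(x,2R)$, and hence $\mu(B(y,R))\leq\mu(B(x,2R))$. The second step is to pass from radius $2R$ down to radius $r$ by iterating the doubling condition. Choosing the integer $k=\lceil\log_2(2R/r)\rceil$ guarantees $2^k r\geq 2R$, so that $B(x,2R)\subset B(x,2^k r)$, and $k$ successive applications of the doubling inequality give $\mu(B(x,2^k r))\leq C_\mu^k\,\mu(B(x,r))$. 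Chaining these together produces $\mu(B(y,R))\leq C_\mu^k\,\mu(B(x,r))$.

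The final step is bookkeeping on the exponent. Using the bound $k\leq\log_2(2R/r)+1=2+\log_2(R/r)$ together with the identity $C_\mu^{\log_2(R/r)}=(R/r)^{\log_2 C_\mu}$, I would write $C_\mu^k\leq C_\mu^2\,(R/r)^{\log_2 C_\mu}$, so that setting $Q=\log_2 C_\mu$ and $c=C_\mu^{-2}$ yields
$$
\frac{\mu(B(x,r))}{\mu(B(y,R))}\geq C_\mu^{-2}\left(\frac{r}{R}\right)^{Q},
$$
which is exactly \eqref{RLMB}. Since $C_\mu\geq 1$, one has $Q\geq 0$ and $0<c\leq 1$ as required.

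I expect the main obstacle to be the forward direction, and within it the transition from the iteration count $k$ to a clean power of $R/r$: one must convert the geometric growth $C_\mu^k$ into polynomial growth in $R/r$ via the exponent $Q=\log_2 C_\mu$. This is the only place where care is needed, principally to avoid an off-by-one error in the choice of $k$ (which is absorbed harmlessly into the constant $c=C_\mu^{-2}$).
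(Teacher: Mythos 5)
Your proof is correct and complete; the paper itself gives no argument for this lemma, merely citing \cite[Lemma~3.3]{bb}, and your two-step proof (trivial specialization for the reverse direction; ball inclusion plus iterated doubling with $Q=\log_2 C_\mu$ for the forward direction) is essentially the standard argument found in that reference.
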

	
	A measure satisfying \eqref{RLMB} is said to satisfy a \emph{relative lower mass bound} and $Q$ is known as the \emph{relative lower mass bound exponent} of the measure. The exponent $Q$ plays the role of dimension for a doubling metric measure space. 
	
	
	
	\begin{remark}\label{remark:lower-mass-bound-exponent}
		As $r\leq R$ in \eqref{RLMB}, increasing the value of $Q$ makes the right-hand side of the inequality smaller. As such, we may assume without loss of generality that $Q>0$. 
	\end{remark}
	
	
	Another important type of measure are those for which a global analogue of the previous definition holds. 
	
	\begin{definition}
		A measure $\mu$ is said to satisfy a {\emph {lower mass bound}} with {\emph {lower mass bound exponent}} $Q>0$ if there exists some constant $0<c_L\leq 1$ such that 
		\begin{equation}\label{LMB}
			\mu(B(x,r))\geq c_L\,r^Q
		\end{equation}
		for all $x\in X$ and $0<r< 2\diam(X)$. 
	\end{definition}
	
	Note that if $(X,d,\mu)$ is a bounded doubling metric measure space, then by taking $R$ sufficiently large, say $R=2\diam(X)$, the doubling condition on $\mu$ implies that $\mu(X)<\infty$. Furthermore, the relative lower mass bound can then be upgraded to a lower mass bound with the same exponent, but with $c_L$ depending on $\diam(X)$, $\mu(X)$, and $Q$. Doubling measures on bounded spaces are not the only measures satisfying \eqref{LMB}, however. We say that a measure is {\it Ahflors $Q$-regular} for some $Q>0$, if there exists a $C\geq 1$ such that $C^{-1}r^Q\leq\mu(B(x,r))\leq Cr^Q$ holds for all $x\in X$ and $0<r<2\diam(X)$. Such measures need not be finite, but satisfy a combination of the lower mass bound and doubling. 
	
	\begin{remark}\label{remark:finite-measure-equals-finite-diameter}
		In the above paragraph, it is pointed out that a doubling metric measure space being bounded implies that it is of finite total measure. In fact, the converse is true as well, see \cite[Lemma 1.9]{bc}.  
	\end{remark}
	
	\begin{definition}\label{def:ann-decay}
		A measure $\mu$ is said to satisfy an \emph{annular decay property} if there exist an exponent $0<\beta\leq 1$ and a constant $C_{\beta}\geq 1$ such that 
		\begin{equation}\label{ADP}
			\mu(B(x,R)\setminus B(x,r))\leq C_{\beta}\,\left( \frac{R-r}{R} \right)^\beta\mu(B(x,R))
		\end{equation}
		for all $x\in X$ and $0<r\leq R<\infty$.
	\end{definition}
	
	
	
	First introduced in \cite[p. 125]{cm} in the setting of manifolds and in \cite{buck} in the setting of metric spaces, this concept, or a variant thereof, has been studied in connection to fractional maximal functions \cite{hlnt}, 
	Hardy inequalities and $Tb$ theorems \cite{djs,rou}, Muckenhoupt weights and reverse H\"older classes \cite{ks,kurk}, and potential theory and capacitary estimates \cite{agg, bbl}. The present paper investigates its connection to the behaviour of maximal functions on certain spaces defined by mean oscillation.
	
	\begin{remark}
		Although general doubling measures may not satisfy an annular decay property, if $\mu$ is doubling and $(X,d)$ is a length space, then $\mu$ does satisfy an annular decay property \cite[Corollary 2.2]{buck}. 
	\end{remark}
	
	
	
	
	\subsection{Discrete convolutions}
	
	In the context of doubling metric measure spaces, we are able to define the so-called \emph{discrete convolution} of a function $f\in L^1_{\text{loc}}(X)$. A metric-measure analogue of the usual smooth convolution in Euclidean harmonic analysis, the discrete convolution approximates the local behavior of $f$ by its averages on balls of a fixed scale. 
	
	The first step in the construction of the discrete convolution of $f$ is the following covering lemma. See for example \cite[Appendix~B.7]{g} and \cite[Chapter 3.3]{HKST}.
	
	\begin{lemma}\label{lemma:cover}
		Let $\mu$ be doubling. For any $\delta>0$, there exists a countable cover $\{B_i\}$ of $X$ by balls with $\rad(B_i)=\delta$ such that the collection $\{\frac{1}{5}B_i\}$ is disjoint. Moreover, for all $K>0$, the collection $\{KB_i\}$ has bounded overlap in the sense that there exists a constant $C\ge 1$, depending on $K$ and $C_\mu$, such that 
		$$
		\sum_{i}\chi_{KB_i}\le C\,.
		$$
	\end{lemma}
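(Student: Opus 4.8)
The plan is to obtain the cover from a maximal separated set and then derive the bounded overlap from the doubling property by a volume-packing argument. First I would fix $\delta>0$ and consider the family of all subsets $A\subseteq X$ that are $\frac{2\delta}{5}$-separated, meaning $d(x,y)\ge\frac{2\delta}{5}$ for all distinct $x,y\in A$. Ordering this family by inclusion and applying Zorn's lemma (the union of a chain of separated sets is again separated) yields a maximal such set $A$; since $(X,d)$ is separable, $A$ is necessarily countable, so I may enumerate $A=\{x_i\}$. Setting $B_i=B(x_i,\delta)$, the separation $d(x_i,x_j)\ge\frac{2\delta}{5}$ for $i\ne j$ forces the balls $\frac15 B_i=B(x_i,\frac{\delta}{5})$ to be pairwise disjoint, since a common point would put $x_i,x_j$ within $\frac{2\delta}{5}$ of each other. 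Moreover, maximality guarantees that every $x\in X$ lies within distance $\frac{2\delta}{5}<\delta$ of some $x_i$ (otherwise $A\cup\{x\}$ would be a strictly larger separated set), so $\{B_i\}$ covers $X$.

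For the bounded overlap, fix $K>0$ and a point $x\in X$, and let $I_x=\{i:x\in KB_i\}=\{i:d(x,x_i)<K\delta\}$; it suffices to bound $\#I_x$ independently of $x$ and $\delta$, as $\sum_i\chi_{KB_i}(x)=\#I_x$. For each $i\in I_x$ the disjoint ball $\frac15 B_i$ is contained in $B\big(x,(K+\tfrac15)\delta\big)$, so by disjointness
$$
\sum_{i\in I_x}\mu\Big(B\big(x_i,\tfrac{\delta}{5}\big)\Big)\le \mu\Big(B\big(x,(K+\tfrac15)\delta\big)\Big).
$$
On the other hand, since $d(x,x_i)<K\delta$ we have $B\big(x,(K+\tfrac15)\delta\big)\subseteq B\big(x_i,(2K+\tfrac15)\delta\big)$, and iterating the doubling condition $N=\lceil\log_2(10K+1)\rceil$ times to pass from radius $\frac{\delta}{5}$ to radius $(2K+\tfrac15)\delta$ gives
$$
\mu\Big(B\big(x,(K+\tfrac15)\delta\big)\Big)\le \mu\Big(B\big(x_i,(2K+\tfrac15)\delta\big)\Big)\le C_\mu^{N}\,\mu\Big(B\big(x_i,\tfrac{\delta}{5}\big)\Big).
$$
Combining the two displays yields $\#I_x\cdot C_\mu^{-N}\mu\big(B(x,(K+\tfrac15)\delta)\big)\le\mu\big(B(x,(K+\tfrac15)\delta)\big)$, hence $\#I_x\le C_\mu^{N}=:C$, a constant depending only on $K$ and $C_\mu$, which is exactly the claimed bound.

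The existence half is essentially a repackaging of the standard maximal-separated-set construction, with separability supplying countability; I expect no real difficulty there beyond checking that $\frac{2\delta}{5}$-separation simultaneously delivers both disjointness of the $\frac15 B_i$ and the covering property. The main point requiring care is the bounded-overlap estimate, and within it the bookkeeping of radii: one must verify both that the disjoint small balls $\frac15 B_i$ sit inside a common ball centered at $x$ and that each such small ball, after inflation, engulfs that common ball, so that a single doubling chain produces a uniform comparison. The key quantitative input is that the number of doublings $N$ needed to inflate radius $\frac{\delta}{5}$ to radius $(2K+\tfrac15)\delta$ depends only on the ratio $10K+1$ and not on $\delta$, which is what makes the final constant $C$ depend only on $K$ and $C_\mu$.
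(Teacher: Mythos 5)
Your proof is correct and is the standard argument: the paper does not prove this lemma itself but cites it to \cite[Appendix~B.7]{g} and \cite[Chapter~3.3]{HKST}, where exactly this construction (a maximal $\tfrac{2\delta}{5}$-separated set, countable by separability, giving the cover and the disjoint fifth-balls, followed by a volume-packing argument via iterated doubling for the bounded overlap) is used. All the radius bookkeeping in your overlap estimate checks out, and the division step is justified since $0<\mu(B)<\infty$ for every ball of positive finite radius by the standing assumptions on $(X,d,\mu)$.
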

	
	With this cover in hand, we are able to build a \emph{Lipschitz partition of unity} subordinate to it as follows.  Again, see \cite[Appendix~B.7]{g}, for example. 
	
	\begin{lemma}\label{lemma:partition}
		Let $\mu$ be doubling, $\delta>0$, and $\{B_i\}$ be a countable cover of $X$ given by Lemma \ref{lemma:cover}. There exists a constant $C\geq 1$ such that, for each $i$, there exists a $C/\delta$-Lipschitz function $\varphi_i$ satisfying $0\le\varphi_i\le 1$, $\supp(\varphi_i)\subset2B_i,$ and $\sum_i\varphi_i\equiv 1$.  Here, the constant $C$ depends only on $C_\mu.$
	\end{lemma}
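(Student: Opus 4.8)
The plan is to construct the $\varphi_i$ by normalizing an explicit family of Lipschitz bump functions adapted to the balls $B_i$. Writing $x_i$ for the prescribed center of $B_i$, so that $\rad(B_i)=\delta$, I would first define for each $i$ the truncated distance function
$$
\psi_i(x)=\max\left(0,\,\min\left(1,\,3-\frac{2\,d(x,x_i)}{\delta}\right)\right).
$$
Since $x\mapsto d(x,x_i)$ is $1$-Lipschitz, composing with the affine map $t\mapsto 3-2t/\delta$ (which is $\tfrac{2}{\delta}$-Lipschitz) and then truncating by $\max(0,\cdot)$ and $\min(1,\cdot)$ preserves Lipschitz bounds, so each $\psi_i$ is $\tfrac{2}{\delta}$-Lipschitz with $0\le\psi_i\le 1$. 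By construction $\psi_i\equiv 1$ on $B_i$ and $\psi_i\equiv 0$ outside $\tfrac{3}{2}B_i$, so that $\supp(\psi_i)\subset 2B_i$ strictly.

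Next I would set $\Psi=\sum_i\psi_i$ and define $\varphi_i=\psi_i/\Psi$. Two facts about $\Psi$, both furnished by Lemma \ref{lemma:cover}, drive the argument. First, since $\{B_i\}$ covers $X$, every $x$ lies in some $B_j$ where $\psi_j(x)=1$; hence $\Psi\ge 1$ everywhere, so $\Psi$ is bounded away from zero and the quotient is well-defined. Second, the bounded overlap of $\{2B_i\}$ (taking $K=2$) guarantees that at most $C$ indices satisfy $x\in 2B_i$, so at most $C$ of the $\psi_i$ are nonzero near any point; this makes $\Psi$ a locally finite sum with $\Psi\le C$, and the identity $\sum_i\varphi_i=\Psi/\Psi\equiv 1$ is then immediate. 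The support condition $\supp(\varphi_i)\subset 2B_i$ and the bound $0\le\varphi_i\le 1$ follow directly from the corresponding properties of $\psi_i$ together with $\psi_i\le\Psi$.

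The main point, and the only step requiring genuine care, is the Lipschitz estimate for $\varphi_i$, where bounded overlap is essential to prevent the constant from degrading with the number of balls. I would first bound the Lipschitz constant of $\Psi$: for any $x,y$, only indices with $x\in 2B_i$ or $y\in 2B_i$ contribute to $\sum_i|\psi_i(x)-\psi_i(y)|$, and there are at most $2C$ of these by overlap, so
$$
|\Psi(x)-\Psi(y)|\le 2C\cdot\frac{2}{\delta}\,d(x,y),
$$
showing that $\Psi$ is $\tfrac{4C}{\delta}$-Lipschitz. Applying the standard quotient estimate to $\varphi_i=\psi_i/\Psi$, using $0\le\psi_i\le 1$ with $\psi_i$ being $\tfrac{2}{\delta}$-Lipschitz and $1\le\Psi\le C$ with $\Psi$ being $\tfrac{4C}{\delta}$-Lipschitz, then yields a Lipschitz constant of the form $C'/\delta$ with $C'$ depending only on the overlap constant, hence only on $C_\mu$. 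I expect no obstacle beyond this bookkeeping; the essential mechanism is simply that local finiteness from bounded overlap is used uniformly across the space.
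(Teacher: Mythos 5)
Your construction is correct, and it is exactly the standard normalized-bump argument that the paper itself does not reproduce but instead cites (Gromov, Appendix B.7, and Heinonen--Koskela--Shanmugalingam--Tyson): Lipschitz cutoffs equal to $1$ on $B_i$ and supported in $\tfrac32 B_i\subset 2B_i$, with $\Psi=\sum_i\psi_i$ pinched between $1$ and the overlap constant so that the quotient $\psi_i/\Psi$ inherits a $C/\delta$ Lipschitz bound depending only on $C_\mu$. No gaps; the use of the covering property for the lower bound $\Psi\ge1$ and of bounded overlap for both the upper bound and the Lipschitz estimate of $\Psi$ is precisely where the hypotheses of Lemma \ref{lemma:cover} enter.
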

	
	Now we have the tools necessary to define the discrete convolution of $f\in L^1_{\text{loc}}(X)$ at scale $\delta>0$ when $\mu$ is doubling. Let $\{B_i\}$ be the countable cover given by Lemma \ref{lemma:cover} and let $\{\varphi_i\}$ be the partition of unity subordinate to this cover given by Lemma \ref{lemma:partition}. We define the discrete convolution of $f$ to be the function
	\begin{equation}\label{discrete}
		f_\delta:=\sum_i f_{B_i}\varphi_i\,,
	\end{equation}
	where we employ the notation $f_{A} := \fint_{A} f\,d\mu$ for any $\mu$-measurable set $A\subset X$ of positive and finite measure. The sum defining $f_\delta$ is a locally finite sum by the bounded overlap of the cover, and so it follows that $f_\delta\in\Liploc(X)$, the space of locally Lipschitz functions on $X$.  
	
	\subsection{Mean oscillation}
	
	In this subsection, we define the most basic of all function spaces defined by mean oscillation, $\BMO$. First introduced in \cite{jn} for functions on a cube in $\R^n$, $\BMO$ is the space of functions of bounded mean oscillation. This space has played a crucial role in the areas of harmonic analysis and partial differential equations where $L^\infty$ is often too pathological, necessitating a replacement to act as the endpoint for the $L^p$-scale as $p\rightarrow\infty$. 
	
	Although the original definition in \cite{jn} involved measuring the mean oscillation of locally integrable functions on subcubes of a fixed cube in $\R^n$, one can also consider the space globally on all of $\R^n$. Furthermore, one could have chosen to measure mean oscillation on balls rather than cubes, resulting in a space that is isomorphic to the one defined on cubes. This leads to a definition of $\BMO$ that is more natural in the context of a metric space, and can be traced back at least as far as \cite{cw}. 
	
	\begin{definition}
		Let $f \in L^1_{\text{loc}}(X)$ and $1\leq p <\infty$. The {\em $p$-mean oscillation} of $f$ on a ball $B\subset X$ is defined to be 
		$$
		\cO_p(f,B):=\left(\fint_{B}\!|f-f_B|^p\,d\mu\right)^{1/p}\,,
		$$
		where $f_B := \fint_B f\,d\mu$. We write $\cO:=\cO_1$.
	\end{definition}
	
	For a fixed ball $B$, the $p$-mean oscillation of $f$ does not change if a constant is added to it. Moreover, $\cO_p(f,B)=0$ if and only if $f$ is almost everywhere equal to some constant on $B$.
	
	\begin{definition}
		We say that $f\in L^1_{\text{loc}}(X)$ has {\em bounded $p$-mean oscillation} for $1\leq p <\infty$, written $f \in \BMO^p(X)$, if 
		$$
		\|f\|_{\BMO^p(X)}:=\sup_{B\subset X} \cO_p(f,B)<\infty\, .
		$$
		We write $\BMO(X):=\BMO^1(X)$.
	\end{definition}
	As with the $p$-mean oscillation itself, adding a constant to a function $f$ does not change $\|f\|_{\BMO^p(X)}$. Also, $\|f\|_{\BMO^p(X)}=0$ if and only if $f$ is almost everywhere equal to some constant. As such, this is technically only a semi-norm, although we will often nonetheless refer to it as the $\BMO$-norm. 
	
	We begin with a few facts about $\BMO^p$ and its norm that will be used in subsequent sections. By the triangle and Jensen's inequalities,
	\begin{equation}\label{mean-constant}
		\fint_{B}\!|f-f_B|^p\,d\mu\leq 2^p\,\fint_{B}\!|f-K|^p\,d\mu
	\end{equation}
	for any constant $K$. Choosing $K=0$, it is easy to see that if $f\in L^\infty(X)$, then $f\in\BMO^p(X)$ with $\|f\|_{\BMO^p(X)}\leq 2 \|f\|_{L^\infty(X)}$. Also by Jensen's inequality, for any ball $B\subset X$,
	$$
	\frac{1}{2^p}\fint_{B}\fint_{B}\!|f(x)-f(y)|^p\,d\mu(x)d\mu(y)\leq \fint_{B}\!|f-f_B|^p\,d\mu\leq \fint_{B}\fint_{B}\!|f(x)-f(y)|^p\,d\mu(x)d\mu(y).
	$$
	
	The following lemma allows us to compare the difference in the mean of a $\BMO$ function on nested balls. In the setting of doubling metric measure spaces, this can be found in \cite[Lemma 15]{vg}, see also \cite[Lemma 7.1]{HKNT}.
	
	\begin{lemma}\label{lem:jones}
		Let $\mu$ be doubling and $f\in\BMO(X)$. There exists a constant $C\geq 1$ such that for any two balls $B_0:=B(x_0,r_0)\subset B(x_1,r_1)=:B_1$,
		$$
		|f_{B_1}-f_{B_0}|\leq C\, \log\left(\frac{C\,r_1}{r_0}\right)\,\|f\|_{\BMO(X)}.
		$$
	\end{lemma}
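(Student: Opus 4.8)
The plan is to reduce the general nested pair to two elementary comparisons: a telescoping chain of \emph{concentric} balls whose radii double at each step, and a single comparison between two balls of the \emph{same} radius with nearby centers. The fundamental building block is a one-step estimate for concentric balls: if $B(x,r)\subset B(x,2r)$, then, using that mean oscillation is unchanged by subtracting a constant, enlarging the domain of integration, and invoking doubling together with the definition of the norm,
$$
|f_{B(x,r)}-f_{B(x,2r)}|\leq \fint_{B(x,r)}\!|f-f_{B(x,2r)}|\,d\mu \leq \frac{\mu(B(x,2r))}{\mu(B(x,r))}\,\fint_{B(x,2r)}\!|f-f_{B(x,2r)}|\,d\mu\leq C_\mu\,\|f\|_{\BMO(X)}.
$$
Thus each doubling of the radius costs at most a fixed multiple of $\|f\|_{\BMO(X)}$, and the same computation shows that comparing the means of any two nested balls whose radii differ by a factor of at most $2$ costs only a constant multiple of $\|f\|_{\BMO(X)}$ (the measure ratio being controlled by doubling).

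First I would handle the concentric case. Assuming, as we may since $B_0\subset B_1$, that $r_0\le r_1$, I would set $N=\lceil \log_2(r_1/r_0)\rceil$ and telescope along the chain $B(x_0,r_0)\subset B(x_0,2r_0)\subset\cdots\subset B(x_0,2^N r_0)$, applying the one-step estimate $N$ times and then once more to pass from $B(x_0,2^N r_0)$ down to $B(x_0,r_1)$ (these differ in radius by a factor at most $2$). This yields $|f_{B_0}-f_{B(x_0,r_1)}|\lesssim (N+1)\,\|f\|_{\BMO(X)}\lesssim \log(Cr_1/r_0)\,\|f\|_{\BMO(X)}$. Next I would bridge from the concentric ball $B(x_0,r_1)$ to the actual target $B_1=B(x_1,r_1)$: since $B_0\subset B_1$ forces $x_0\in B_1$, we have $d(x_0,x_1)<r_1$, so by the triangle inequality both $B(x_0,r_1)$ and $B(x_1,r_1)$ lie in the common ball $B^*:=B(x_0,2r_1)$, whose measure is comparable to that of each by doubling. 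Bounding $|f_{B(x_0,r_1)}-f_{B^*}|$ and $|f_{B_1}-f_{B^*}|$ exactly as in the building-block estimate, the triangle inequality gives $|f_{B(x_0,r_1)}-f_{B_1}|\lesssim\|f\|_{\BMO(X)}$.

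Combining the two comparisons, a final application of the triangle inequality produces $|f_{B_0}-f_{B_1}|\lesssim \log(Cr_1/r_0)\,\|f\|_{\BMO(X)}+\|f\|_{\BMO(X)}$, and the leftover additive term is absorbed into the logarithm by enlarging the constant inside it; this is precisely why the statement carries $Cr_1/r_0$ rather than $r_1/r_0$, and it also keeps the right-hand side nonnegative when $r_1/r_0$ is close to $1$. I expect the only genuinely non-routine point to be the non-concentric bridge — converting the containment $B_0\subset B_1$ into a quantitative relation between centers and radii so that $B(x_0,r_1)$ and $B(x_1,r_1)$ sit inside a single ball of comparable measure. Everything else is mechanical telescoping, and the main care required is in bookkeeping the doubling constants along the chain and in the bridging step; no idea beyond iterating the single doubling estimate is needed.
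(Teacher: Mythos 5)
Your proof is correct: the one-step doubling comparison for nested balls of comparable radius, the telescoping over a dyadic chain of concentric balls, and the final recentering through the common ball $B(x_0,2r_1)$ of comparable measure together give the stated bound, with the leftover additive constant absorbed into the $C$ inside the logarithm (and your reduction to $r_0\le r_1$ is the intended reading of the hypothesis). The paper does not prove this lemma itself but cites it from \cite[Lemma 15]{vg} and \cite[Lemma 7.1]{HKNT}, where essentially this same chaining argument appears, so your approach matches the standard one.
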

	
	\begin{remark}\label{remark:john-nirenberg}
		When $\mu$ is doubling, a John-Nirenberg inequality holds for $\BMO(X)$, which implies that $\BMO^p(X)\cong\BMO(X)$ for all $1<p<\infty$ \cite[Chapter~3.3, 3.4]{bb}.
	\end{remark}
	
	We also need to define the following, which can be seen to be a subclass of $\BMO(X)$ by taking $K=\essinf\limits_{B}f$ in \eqref{mean-constant}. 
	
	\begin{definition}\label{def:BLO}
		We say that $f\in L^1_{\text{loc}}(X)$ has {\em bounded lower oscillation}, written $f \in \BLO(X)$, if 
		$$
		\|f\|_{\BLO(X)}:=\sup_{B\subset X} \fint_{B}\![f-\essinf_{B}f]\,d\mu<\infty\,.
		$$
	\end{definition}
	
	
	\section{Vanishing Mean Oscillation}\label{sec:VMO}
	An important subspace of $\BMO(\R^n)$ is the space of functions of vanishing mean oscillation, $\VMO(\R^n)$. This space was first defined by Sarason in \cite{sa}, where many of its properties were proven; in particular, $\VMO$ coincides with the closure of the uniformly continuous functions in the $\BMO$-norm. To the best knowledge of the authors, $\VMO$ has very seldom been considered in the setting of metric measure spaces. For the rest of this section, we assume that $(X,d,\mu)$ is a doubling metric measure space, and we show some basic properties of $\VMO(X)$, including an analogue of the aforementioned result of Sarason. 
	
	\begin{definition}
		We define the {\em modulus of oscillation} of $f\in L^1_{\text{loc}}(X)$ to be 
		$$
		\omega_p(f,r):=\sup_{\rad(B)\leq r}\cO_p(f,B)\,,
		$$
		where the supremum is taken over all balls $B\subset X$ of radius at most r.
	\end{definition}
	
	
	\begin{definition}
		We say that $f\in\BMO^p(X)$ is in $\VMO^p(X)$ if
		$$
		\omega_p(f,0):=\lim_{r\rightarrow{0}^+}\omega_p(f,r)=0\,.
		$$
		We write $\VMO(X):=\VMO^1(X)$.
	\end{definition}
	
	
	Note that we are making a choice to define $\VMO^p(X)$ in terms of balls with vanishing radii. One could just as easily define an analogous space where the vanishing mean oscillation occurs along balls with vanishing measure, which we denote by $\VMO_\mu^p(X)$. The spaces $\VMO^p(X)$ and $\VMO_\mu^p(X)$ are, {\it a priori}, different, as shown for $p=1$ by the following examples.
	
	
	\begin{example}
		Consider $X=[0,\infty)\subset\R$ with the Euclidean metric, and define the weight $\omega(x)=\sum_{n=0}^\infty\frac{1}{\sqrt{n+1}}\chi_{I_n}$, where $I_n=[n,n+1)$ for any non-negative integer $n$. This is a doubling weight, and we endow $X$ with the measure $d\mu(x)=\omega(x)\,dx$, where $dx$ denotes the Lebesgue measure on $X$. Denote by $\mathcal{O}_\omega$ the mean oscillation taken with respect to the measure $\mu$ and by $\mathcal{O}$ the mean oscillation with respect to Lebesgue measure.
		
		For 
		\[
		g(x)=
		\begin{cases}
			2x,&0\le x<1/2\\
			-2x+2,&1/2\le x<1,
		\end{cases}
		\]
		set $f_n(x)=g(x-n)$ for each non-negative integer $n$, and then define $f(x)=\sum_{n=0}^\infty f_n(x)$. Note that the each of the functions $f_n$ is non-zero only on the interval $I_n$, on which $\omega$ is constant, and the intervals $\{I_n\}$ are mutually disjoint. Hence, by properties of the mean oscillation, we have that for each $n$,
		$$
		\mathcal{O}_\omega(f,I_n)=\mathcal{O}_\omega(f_n,I_n)=\mathcal{O}(f_n,I_n)=\mathcal{O}(g,I_0)=\frac{1}{4}.
		$$
		From this is follows that $f\not\in\VMO_\mu(X)$.  Indeed, since $\mu(I_n)=\frac{1}{\sqrt{n+1}}$ is decreasing in $n$, for each $0<r<1$ we have that
		$$
		\sup_{\mu(I)\leq r}\mathcal{O}_\omega(f,I)\geq \sup_{n\geq r^{-2}}\mathcal{O}_\omega(f,I_n)=\frac{1}{4}>0.
		$$
		
		
		By construction, $f$ is a bounded 2-Lipschitz function, and so it follows that $f\in\VMO(X)$. Indeed, being bounded implies that $f\in\BMO(X)$ and the Lipschitz condition implies that for any interval $I$ of radius at most $r>0$, we have
		$$
		\mathcal{O}_\omega(f,I)\leq \fint_{I}\fint_{I}\!|f(x)-f(y)|\,d\mu(x)d\mu(y)\leq 2\ell(I)\leq 4r. 
		$$
		Therefore $f\in\VMO(X)\setminus\VMO_\mu(X).$
	\end{example}
	
	\begin{example}
		Consider the same setup as in the previous example, but this time with the doubling weight $\omega(x)=\sum_{n=0}^{\infty}(n+1)\chi_{I_n}$ and $f_n(x)=g((n+1)(x-n^2))$. Define $f(x)=\sum_{n=1}^
		\infty f_n(x)$. Each of the functions $f_n$ is non-zero only on the interval $I'_{n^2}=[n^2,n^2+\frac{1}{n+1}
		)$, $n\geq 1$, and the intervals $\{I'_{n^2}\}$ are mutually disjoint.  Each $I'_{n^2}$ is a subinterval of $I_{n^2}$, on which again $\omega$ is constant.   
		As before, we have by properties of the mean oscillation that for each $n$,
		$$
		\mathcal{O}_\omega(f,I'_{n^2})=\mathcal{O}_\omega(f_n,I'_{n^2})=\mathcal{O}(f_n,I'_{n^2})=\mathcal{O}(g,(0,1))=\frac{1}{4}.
		$$
		From this is follows that $f\not\in\VMO(X)$.  Indeed, since $\rad(I'_{n^2})=\frac{1}{2(n+1)}$ is decreasing in $n$, for each $0<r<1/2$ we have that
		$$
		\sup_{\rad(I)\leq r}\mathcal{O}_\omega(f,I)\geq \sup_{n\geq (2r)^{-1}}\mathcal{O}_\omega(f,I'_{n^2})=\frac{1}{4}>0.
		$$
		
		
		For $0<r<1/2$, consider an interval $I$ such that $0<\mu(I)<r$. 
		We claim that $I$ intersects at most one of $\{I'_{n^2}\}_{n\geq 1}$. To show this, note first that $I$ can intersect at most two of the $\{I_n\}_{n\geq 1}$; otherwise, $I$ contains some $I_m$ for $m\geq 1$, and so $r\geq\mu(I)\geq \mu(I\cap I_m)=(m+1)>1/2$. Hence, if $I$ intersects $I_m$ for $m\geq 1$, then either $I\subset I_m$ or it intersects exactly one of $I_{m-1}$ or $I_{m+1}$. In any of these cases, we have that $r\geq \mu(I)\geq m\ell(I)$, and so $\ell(I)\leq r/m<1/2$. In particular, if $I$ intersects $I'_{k^2}$, then it intersects $I_{k^2}$, and so $\ell(I)\leq r/k^2<1/2$. From the separation of the $\{I'_{n^2}\}$, the claim follows.
		
		Fix $0<r<1/2$ and an interval $I$ such that $0<\mu(I)<r$. Now, assume that $I$ intersects $I'_{k^2}$ for some $k\geq 1$. From the above, $\ell(I)\leq r/k^2$ and $I$ cannot intersect any other of the $\{I'_{n^2}\}$. Thus, as $f$ is $2(k+1)$-Lipschitz on $I'_{k^2}$, it is also $2(k+1)$-Lipschitz on $I$, and so we have that 
		\begin{align*}
			\cO_\omega(f,I)&\le\fint_I\fint_I|f(x)-f(y)|d\mu(y)d\mu(x)\le 2(k+1)\ell(I)<\frac{2(k+1)r}{k^2}\le 4r.
		\end{align*}
		If $I$ does not intersect any of the $\{I'_{n^2}\}$, then $\cO_\omega(f,I)=0,$ since $f$ is non-zero only on the intervals $\{I'_{n^2}\}$.  In either case, we have that $\cO_\omega(f,I)\le 4 r$ for all intervals $I$ such that $\mu(I)<r$, and so $f\in\VMO_\mu(X)$. Therefore, $f\in\VMO_\mu(X)\setminus\VMO(X).$
	\end{example}
	
	On the other hand, when $\mu$ satisfies $r^{Q_2}\lesssim\mu(B(x,r))$ for all $x\in X$ and $0<r<2\diam(X)$, and some $Q_2>0$, then $\VMO^p(X)\subset\VMO_\mu^p(X)$; analogously, when $\mu$ satisfies $\mu(B(x,r))\lesssim r^{Q_1}$ for all $x\in X$ and $0<r<2\diam(X)$, and some $Q_1>0$, then $\VMO_\mu^p(X)\subset\VMO^p(X)$. In particular, these two spaces coincide when $\mu$ is Ahlfors $Q$-regular, or when $\mu$ is a doubling measure on a bounded and connected metric space (this follows from taking $R=2\,\diam(X)$ in \cite[Lemma~3.3 and Corollary~3.8]{bb}).  In this section, which aims to show a relationship between vanishing mean oscillation and continuous functions, the choice of $\VMO(X)$ is more natural.
	
	
	
	
	
	\begin{proposition}\label{proposition:closed}
		For $1\le p<\infty$, $\VMO^p(X)$ is closed in $\BMO(X)$. 
	\end{proposition}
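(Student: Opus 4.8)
The plan is to take a sequence $\{f_k\}\subset\VMO^p(X)$ converging to some $f$ in the $\BMO(X)$ norm and to show that the limit $f$ necessarily lies in $\VMO^p(X)$. Since closedness in $\BMO(X)$ presupposes that the limit $f$ already belongs to $\BMO(X)$, the task reduces to verifying the two defining properties of $\VMO^p(X)$: that $f\in\BMO^p(X)$, and that $\omega_p(f,0)=0$.

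The first point is immediate. When $p=1$ there is nothing to prove, since $\BMO^1(X)=\BMO(X)$. When $1<p<\infty$, the John--Nirenberg inequality of Remark~\ref{remark:john-nirenberg} gives $\BMO^p(X)\cong\BMO(X)$ with comparable norms; in particular $f\in\BMO(X)$ forces $f\in\BMO^p(X)$, and the estimate $\|f_k-f\|_{\BMO^p(X)}\leq C\,\|f_k-f\|_{\BMO(X)}$ shows that $f_k\to f$ also in the $\BMO^p(X)$ norm. Thus in every case convergence in $\BMO(X)$ upgrades to convergence in $\BMO^p(X)$.

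For the vanishing oscillation, I would exploit the subadditivity of $\cO_p(\,\cdot\,,B)$: since the mean $f_B$ depends linearly on $f$ and $\cO_p(\,\cdot\,,B)$ is an $L^p$-type seminorm on each ball, the triangle inequality gives $\cO_p(f,B)\leq\cO_p(f-f_k,B)+\cO_p(f_k,B)$ for every ball $B$. Taking the supremum over balls of radius at most $r$ and bounding the first term by the full norm yields $\omega_p(f,r)\leq\|f-f_k\|_{\BMO^p(X)}+\omega_p(f_k,r)$ for every $k$ and every $r>0$. A two-parameter argument then closes the proof: given $\eps>0$, first fix $k$ so large that $\|f-f_k\|_{\BMO^p(X)}<\eps/2$, using the convergence established above, and then, since $f_k\in\VMO^p(X)$, choose $r_0>0$ so that $\omega_p(f_k,r)<\eps/2$ for all $0<r<r_0$. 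For such $r$ we obtain $\omega_p(f,r)<\eps$, whence $\omega_p(f,0)=0$.

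The argument is essentially soft, the only nontrivial input being the equivalence of the $\BMO^p(X)$ and $\BMO(X)$ norms for $p>1$, which is precisely what allows $\BMO(X)$-convergence of the approximating sequence to control its $p$-mean oscillations. The one point that must be respected is the order of quantifiers in the final step: the index $k$ must be chosen before the scale $r$, since $\|f-f_k\|_{\BMO^p(X)}$ is uniform in $r$ while $\omega_p(f_k,r)$ is not.
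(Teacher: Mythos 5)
Your proposal is correct and follows essentially the same route as the paper: the same triangle inequality $\cO_p(f,B)\le\cO_p(f_k,B)+\|f-f_k\|_{\BMO^p(X)}$, the same passage to $\omega_p(f,r)$, and the same use of the John--Nirenberg equivalence $\BMO^p(X)\cong\BMO(X)$ to translate between the two norms (the paper simply proves closedness in $\BMO^p(X)$ first and invokes the equivalence at the end, while you invoke it at the start). Your epsilon--delta handling of the two limits is just a rephrasing of the paper's ``send $r\to0^+$, then $n\to\infty$'' step.
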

	
	\begin{proof}
		Take a sequence $\{f_n\}\subset\VMO^p(X)$ such that $f_n \rightarrow f$ in $\BMO^p(X)$. Fixing $n$, we have by the triangle inequality that for any ball $B\subset X$,
		$$
		\cO_p(f,B)\leq \cO_p(f_n,B)+\cO_p(f-f_n,B)\leq \cO_p(f_n,B)+\|f-f_n\|_{\BMO^p(X)}\,.
		$$
		Fixing $r>0$ and taking suprema over all balls satisfying $\rad(B)\leq r$, we have that 
		$$
		\omega_p(f,r) \leq \omega_p(f_n,r) + \|f-f_n\|_{\BMO^p(X)}\,.
		$$
		First, we send $r\rightarrow{0^+}$ and see that 
		$$
		\omega_p(f,0) \leq \omega_p(f_n,0) + \|f-f_n\|_{\BMO^p}=\|f-f_n\|_{\BMO^p(X)}
		$$
		since $f_n\in\VMO^p(X)$. Next, we send $n\rightarrow\infty$ and find that $\omega_p(f,0)=0$, giving that $f\in\VMO^p(X)$. 
		
		This demonstrates that $\VMO^p(X)$ is closed in $\BMO^p(X)$. Since $\BMO^p(X)\cong\BMO(X)$ for $1<p<\infty$, see Remark \ref{remark:john-nirenberg}, we also have that $\VMO^p(X)$ is closed in $\BMO(X)$.
	\end{proof}	
	
	In what follows, we denote by $C_u(X)$ the space of uniformly continuous functions on $X$. Any closures are meant to be taken with respect to the $\BMO(X)$-norm. 
	
	
	\begin{lemma}\label{prop:If}  
		For $1\le p<\infty,$ $\overline{ C_u(X)\cap\BMO(X)}\subset\VMO^p(X)$.
	\end{lemma}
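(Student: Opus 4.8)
The plan is to reduce the statement to the dense subset $C_u(X)\cap\BMO(X)$ and then exploit uniform continuity directly. Since Proposition \ref{proposition:closed} guarantees that $\VMO^p(X)$ is closed in $\BMO(X)$, it suffices to establish the inclusion $C_u(X)\cap\BMO(X)\subset\VMO^p(X)$; taking closures in the $\BMO(X)$-norm then yields $\overline{C_u(X)\cap\BMO(X)}\subset\overline{\VMO^p(X)}=\VMO^p(X)$, as desired.

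To prove the inclusion, I would fix $f\in C_u(X)\cap\BMO(X)$ and first confirm that $f\in\BMO^p(X)$, so that membership in $\VMO^p(X)$ is even well-posed. Since $\mu$ is doubling, the John-Nirenberg inequality gives $\BMO^p(X)\cong\BMO(X)$ (Remark \ref{remark:john-nirenberg}), so $f\in\BMO(X)=\BMO^1(X)$ automatically upgrades to $f\in\BMO^p(X)$. It then remains only to verify the vanishing condition $\omega_p(f,0)=0$.

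This is where uniform continuity does the work. Given $\eps>0$, choose $\delta>0$ so that $d(x,y)<\delta$ implies $|f(x)-f(y)|<\eps$. For any ball $B$ with $\rad(B)\leq\delta/2$, any two points $x,y\in B$ satisfy $d(x,y)<2\,\rad(B)\leq\delta$, so that $|f(x)-f(y)|<\eps$. Feeding this into the double-integral bound on $p$-mean oscillation recorded in the preliminaries, I obtain
$$
\cO_p(f,B)^p\leq\fint_{B}\fint_{B}|f(x)-f(y)|^p\,d\mu(x)\,d\mu(y)\leq\eps^p.
$$
Taking the supremum over all such balls gives $\omega_p(f,\delta/2)\leq\eps$, and letting $\eps\to 0^+$ forces $\omega_p(f,0)=0$, which is exactly the claim.

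There is no substantial obstacle in this argument; it is essentially the straightforward direction of the metric-space analogue of Sarason's theorem. The only points requiring minor care are the well-definedness step (ensuring $f\in\BMO^p(X)$, handled by John-Nirenberg) and the bookkeeping in the reduction, namely that the closure is taken in the $\BMO(X)$-norm while $\VMO^p(X)$ is closed with respect to precisely this norm by Proposition \ref{proposition:closed}. The genuinely harder reverse inclusion, which would require constructing uniformly continuous approximants of a given $\VMO^p(X)$ function (for instance via the discrete convolutions introduced earlier), is where the real difficulty of the full characterization lies and is presumably treated separately.
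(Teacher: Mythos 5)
Your proposal is correct and follows essentially the same route as the paper: uniform continuity gives a pointwise bound $|f(x)-f(y)|<\eps$ on balls of radius at most $\delta/2$, the double-integral bound on the $p$-mean oscillation converts this to $\omega_p(f,\delta/2)\leq\eps$, and the closedness of $\VMO^p(X)$ from Proposition \ref{proposition:closed} handles the closure. The only difference is that you explicitly record the John--Nirenberg upgrade to $\BMO^p(X)$, which the paper leaves implicit.
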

	
	\begin{proof}
		Let $f\in C_u(X)\cap\BMO(X)$. From the uniform continuity of $f$, for any $\eps>0$ there exists $r>0$ such that $|f(x)-f(y)|<\eps$ whenever $d(x,y)<r$. For any ball $B$ of radius at most $r/2$, 
		$$
		\cO_p(f,B)\leq \left(\fint_{B}\fint_{B}\!|f(x)-f(y)|^p\,d\mu(x)\,d\mu(y)\right)^{1/p}<\eps,
		$$
		which implies, sending $\eps\rightarrow{0^+}$, that
		$$
		\omega_p(f,0)=\lim_{r\rightarrow{0^+}}\sup_{\rad(B)\leq r/2}\cO_p(f,B)=0.
		$$
		The result then follows from Proposition \ref{proposition:closed}.
	\end{proof}
	
	
	\begin{lemma}\label{lem:UC}
		For $f\in\BMO(X)$ and $\delta>0$, there exists a constant $C>0$, depending only on the doubling constant of $\mu$, such that the discrete convolution $f_\delta$ as defined in \eqref{discrete} is uniformly locally $\tfrac{C\,\|f\|_{\BMO(X)}}{\delta}$-Lipschitz in the sense that $d(x,y)<\delta$ implies that 
		\begin{equation}\label{Lip} 
			|f_\delta(x)-f_\delta(y)|\leq \frac{C}{\delta}\,\|f\|_{\BMO(X)}\,d(x,y).
		\end{equation}
		In particular, $f_\delta$ is uniformly continuous. 
	\end{lemma}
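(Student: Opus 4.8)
The plan is to exploit the two defining features of the partition of unity from Lemma \ref{lemma:partition} --- that $\sum_i\varphi_i\equiv 1$ and that each $\varphi_i$ is $C/\delta$-Lipschitz --- together with the bounded overlap of the cover from Lemma \ref{lemma:cover} and the nested-ball comparison of averages from Lemma \ref{lem:jones}. First I would fix $x,y\in X$ with $d(x,y)<\delta$ and let $I$ be the set of indices $i$ for which $\varphi_i(x)\neq 0$ or $\varphi_i(y)\neq 0$. Since $\supp(\varphi_i)\subset 2B_i$ and $\rad(2B_i)=2\delta$, every $i\in I$ satisfies $x\in 2B_i$ or $y\in 2B_i$; as $d(x,y)<\delta$, the triangle inequality then places both $x$ and $y$ in $3B_i$, whence $B_i\subset B^*:=B(x,4\delta)$. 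Because $x\in 3B_i$ for every $i\in I$, the bounded overlap of $\{3B_i\}$ (Lemma \ref{lemma:cover} with $K=3$) bounds $\#I$ by a constant depending only on $C_\mu$; in particular the sums below are finite.

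Next I would use the partition of unity to write, for any constant $c$,
$$
f_\delta(x)-f_\delta(y)=\sum_i f_{B_i}\bigl(\varphi_i(x)-\varphi_i(y)\bigr)=\sum_{i\in I}\bigl(f_{B_i}-c\bigr)\bigl(\varphi_i(x)-\varphi_i(y)\bigr),
$$
where the second equality is justified because $\sum_i\bigl(\varphi_i(x)-\varphi_i(y)\bigr)=1-1=0$ (a finite sum, by local finiteness of the cover). Taking $c=f_{B^*}$ and applying the $C/\delta$-Lipschitz bound on each $\varphi_i$ yields
$$
|f_\delta(x)-f_\delta(y)|\leq \frac{C}{\delta}\,d(x,y)\sum_{i\in I}|f_{B_i}-f_{B^*}|.
$$

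The main step is to control the sum on the right by $C\,\|f\|_{\BMO(X)}$. For each $i\in I$ we have the nested inclusion $B_i\subset B^*$ with $\rad(B^*)/\rad(B_i)=4$, so Lemma \ref{lem:jones} gives $|f_{B_i}-f_{B^*}|\leq C\log(4C)\,\|f\|_{\BMO(X)}$, a bound independent of both $i$ and $\delta$. Summing over the at most $\#I\leq C$ indices in $I$ produces $\sum_{i\in I}|f_{B_i}-f_{B^*}|\leq C\,\|f\|_{\BMO(X)}$, which establishes \eqref{Lip}. Uniform continuity is then immediate: given $\eps>0$, any $x,y$ with $d(x,y)<\min\{\delta,\ \eps\delta/(C\|f\|_{\BMO(X)}+1)\}$ satisfy $|f_\delta(x)-f_\delta(y)|<\eps$.

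The hard part will be the bookkeeping that simultaneously controls the number of nonzero terms and the size of each average $f_{B_i}$ relative to a single reference ball. The clean way through is the observation that all relevant $B_i$ lie inside the common ball $B^*=B(x,4\delta)$ of comparable radius: bounded overlap caps $\#I$, and Lemma \ref{lem:jones} caps each $|f_{B_i}-f_{B^*}|$ by a $\delta$-independent multiple of $\|f\|_{\BMO(X)}$. Crucially, subtracting the constant $c$ --- legitimate precisely because $\sum_i\varphi_i\equiv 1$ --- is what converts the raw averages, which need not be small individually, into $\BMO$-controllable differences.
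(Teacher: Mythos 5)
Your proposal is correct and follows essentially the same route as the paper: both arguments use $\sum_i\varphi_i\equiv 1$ to subtract a reference constant from the averages $f_{B_i}$, bound the number of relevant indices by the bounded overlap of the cover, apply the $C/\delta$-Lipschitz bound on the $\varphi_i$, and control each $|f_{B_i}-(\text{reference})|$ by $C\|f\|_{\BMO(X)}$ via comparison inside a common ball of radius $4\delta$. The only cosmetic difference is that you invoke Lemma \ref{lem:jones} with the fixed ratio $4$ where the paper runs the underlying doubling estimate directly, and you take $f_{B^*}$ rather than $f_{B_{i_0}}$ as the reference constant.
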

	
	\begin{proof}
		We may assume that $f$ is not constant almost everywhere; otherwise, $f_\delta$ is equal to that constant everywhere, and the conclusion follows.
		In particular, then we have $\|f\|_{\BMO}>0$.
		
		For $x,y\in X$ such that $d(x,y)<\delta$, set $I_{x,y}:=\{i\in\N:x\in 2B_i\;\text{or}\;y\in 2B_i$\}. By the bounded overlap property of the balls coming from Lemma \ref{lemma:cover} with $K=2$, we have that $I_{x,y}$ is a finite set with cardinality independent of $\delta$. By the construction of the partition of unity as in Lemma \ref{lemma:partition},  
		$$
		|f_\delta(x)-f_\delta(y)|=\left|\sum_{i\in I_{x,y}}(f_{B_i}-f_{B_{i_0}})(\varphi_i(x)-\varphi_i(y))\right|\leq \sum_{i\in I_{x,y}}|f_{B_i}-f_{B_{i_0}}||\varphi_i(x)-\varphi_i(y)|
		$$
		for any $i_0\in I_{x,y}$. 
		
		For $i,i_0\in I_{x,y}$, we can take a ball $\widetilde{B}$ of radius $4\delta$ that contains $B_i$ and $B_{i_0}$. From doubling, it follows that 
		$$
		|f_{B_i}-f_{\widetilde{B}}|\leq \fint_{B_i}\!|f-f_{\widetilde{B}}|\,d\mu \lesssim \fint_{\widetilde{B}}\!|f-f_{\widetilde{B}}|\,d\mu\leq \|f\|_{\BMO}
		$$
		and a similar inequality holding for $|f_{B_{i_0}}-f_{\widetilde{B}}|$. Thus, $|f_{B_i}-f_{B_{i_0}}|\lesssim \|f\|_{\BMO}$. Therefore, \eqref{Lip} follows from this, the Lipschitz property of $\varphi_i$, and the bounded overlap of the balls with index in $I_{x,y}$.
	\end{proof}
	
	\begin{lemma}\label{prop:OnlyIf}
		For $1\le p<\infty$, $\VMO^p(X)\subset\overline{C_u(X)\cap\BMO(X)}$.
	\end{lemma}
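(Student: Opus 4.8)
The plan is to approximate an arbitrary $f\in\VMO^p(X)$ in the $\BMO$-norm by its discrete convolutions. Fix $f\in\VMO^p(X)$; we may assume $f$ is not a.e.\ constant, since otherwise $f\in C_u(X)\cap\BMO(X)$ already. For each $\delta>0$ let $f_\delta$ be the discrete convolution defined in \eqref{discrete}. By Lemma \ref{lem:UC}, $f_\delta\in C_u(X)$, so it suffices to show that $\|f-f_\delta\|_{\BMO^p(X)}\to 0$ as $\delta\to 0^+$: this simultaneously shows $f_\delta\in\BMO^p(X)\subseteq\BMO(X)$ for small $\delta$ and, since $\|\cdot\|_{\BMO^1}\leq\|\cdot\|_{\BMO^p}$ by Jensen's inequality, that $f_\delta\to f$ in $\BMO(X)$; hence $f\in\overline{C_u(X)\cap\BMO(X)}$. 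Everything reduces to estimating $\cO_p(f-f_\delta,B)$ for an arbitrary ball $B=B(x_0,r)$ by a quantity that tends to $0$ with $\delta$ uniformly in $B$; the natural candidate is $\omega_p(f,C\delta)$, which vanishes as $\delta\to 0^+$ precisely because $f\in\VMO^p(X)$.

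For large balls, say $r\geq\delta/2$, I would argue directly. Since $\sum_i\varphi_i\equiv 1$ and each $\varphi_i\geq 0$, convexity of $t\mapsto t^p$ gives the pointwise bound $|f(x)-f_\delta(x)|^p\leq\sum_i|f(x)-f_{B_i}|^p\varphi_i(x)$. Integrating over $B$, using $\supp\varphi_i\subset 2B_i$ and $\varphi_i\leq 1$, and then replacing $f_{B_i}$ by $f_{2B_i}$ at the cost of a doubling constant, each term is controlled by $\mu(2B_i)\,\cO_p(f,2B_i)^p\leq\mu(2B_i)\,\omega_p(f,2\delta)^p$. The balls $2B_i$ meeting $B$ are contained in a fixed dilate $CB$ (here one uses $\delta\leq 2r$) and have bounded overlap by Lemma \ref{lemma:cover}, so $\sum_i\mu(2B_i)\lesssim\mu(B)$; this yields $\fint_B|f-f_\delta|^p\,d\mu\lesssim\omega_p(f,2\delta)^p$ and hence, by \eqref{mean-constant} with $K=0$, $\cO_p(f-f_\delta,B)\lesssim\omega_p(f,2\delta)$.

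The delicate case is that of small balls, $r<\delta/2$, and this is where I expect the main obstacle to lie. Here the triangle inequality reduces matters to $\cO_p(f-f_\delta,B)\leq\omega_p(f,r)+\cO_p(f_\delta,B)$, and the first term is harmless; the difficulty is $\cO_p(f_\delta,B)$, since on a ball of radius comparable to $\delta$ the convolution $f_\delta$ genuinely oscillates like $f$ itself. The Lipschitz bound from Lemma \ref{lem:UC} only gives $\cO_p(f_\delta,B)\lesssim(r/\delta)\|f\|_{\BMO}$, which is not small when $r$ is a fixed fraction of $\delta$. The remedy is to sharpen that estimate: revisiting its proof, the differences $|f_{B_i}-f_{B_{i_0}}|$ for indices contributing near a point are bounded not merely by $\|f\|_{\BMO}$ but by $\cO_p(f,\widetilde B)\leq\omega_p(f,C\delta)$ for a ball $\widetilde B$ of radius comparable to $\delta$ containing the relevant $B_i$. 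This upgrades the local Lipschitz constant of $f_\delta$ to $C\,\omega_p(f,C\delta)/\delta$, so that $d(x,y)<\delta$ forces $|f_\delta(x)-f_\delta(y)|\leq C\,\omega_p(f,C\delta)\,d(x,y)/\delta$. Since $d(x,y)\leq 2r<\delta$ on $B$, we obtain $\cO_p(f_\delta,B)\lesssim(r/\delta)\,\omega_p(f,C\delta)\leq\omega_p(f,C\delta)$, and therefore $\cO_p(f-f_\delta,B)\lesssim\omega_p(f,C\delta)$ in this case as well. Combining the two cases gives $\|f-f_\delta\|_{\BMO^p(X)}\lesssim\omega_p(f,C\delta)\to 0$ as $\delta\to 0^+$, which completes the proof.
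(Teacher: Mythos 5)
Your proposal is correct and follows essentially the same route as the paper's proof: approximate $f$ by its discrete convolutions $f_\delta$, split into balls of radius small versus large relative to $\delta$, and in the small-ball case sharpen the estimate of Lemma \ref{lem:UC} by bounding $|f_{B_i}-f_{B_{i_0}}|$ via the oscillation of $f$ on a ball of radius comparable to $\delta$ (hence by $\omega_p(f,C\delta)$) rather than by $\|f\|_{\BMO(X)}$, which is exactly the paper's key step. The only differences are cosmetic: you run the large-ball estimate through a pointwise convexity bound in $L^p$ instead of summing $L^1$-averages over the cover balls meeting $B$, and you conclude via $\|\cdot\|_{\BMO}\le\|\cdot\|_{\BMO^p}$ rather than applying H\"older's inequality inside the estimates.
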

	
	
	\begin{proof}
		This lemma is proven by showing that there exists a constant $C\ge 1$, depending only on $C_\mu$, such that for all $f\in \BMO(X)$, $\dist(f,C_u(X)\cap\BMO(X))\le C\,\omega_p(f,0),$ where the distance is taken with respect to the $\BMO(X)$-norm.
		To this end, fix $f\in\BMO(X)$ and $\varepsilon>0.$  Choose $r>0$ such that 
		\begin{equation}\label{*}
			\sup_{\rad(B)\le 7r}\cO_p(f,B)<\omega_p(f,0)+\varepsilon.
		\end{equation}
		We then consider the discrete convolution of $f$ at the scale $\delta=r$, with associated cover $\{B_i\}$ and partition of unity $\{\varphi_i\}$, and aim to show that $\|f-f_\delta\|_{\BMO(X)}$ is controlled by $\omega_p(f,0)$.  
		
		Consider a ball $B\subset X$ with $\rad(B)\le \delta$. 
		For $x,y\in B,$ let $I_{x,y}:=\{i\in\N:x\in 2B_i\text{ or }y\in2B_i\}.$ For any $i_0\in I_{x,y},$ it follows from properties of the partition of unity, doubling, and H\"older's inequality that 
		\begin{align*}\label{eq:UniformContinuity}
			|f_\delta(x)-f_\delta(y)|&\le|f_\delta(x)-\sum_if_{B_{i_0}}\varphi_i(x)|+|\sum_if_{B_{i_0}}\varphi_i(y)-f_\delta(y)|\\
			&\le\sum_i\varphi_i(x)|f_{B_i}-f_{B_{i_0}}|+\sum_i\varphi_i(y)|f_{B_i}-f_{B_{i_0}}|\\
			&\lesssim \sum_{i\in I_{x,y}}|f_{B_i}-f_{B_{i_0}}|\\
			&\lesssim \sum_{i\in I_{x,y}}\fint_{7B_{i_0}}|f-f_{7B_{i_0}}|\,d\mu\\
			&\le \sum_{i\in I_{x,y}}\left(\fint_{7B_{i_0}}|f-f_{7B_{i_0}}|^p\,d\mu\right)^{1/p}\\
			&\lesssim \omega_p(f,0)+\varepsilon.
		\end{align*}
		The last inequality follows from the choice of $\delta$ satisfying \eqref{*} and bounded overlap. Indeed, the fact that $d(x,y)<2\delta$ implies that $I_{x,y}$ contains at most $C$ elements, for some constant $C\ge 1$ that depends on $C_\mu$ but is independent of $\delta$.  
		By H\"older's inequality and \eqref{*}, 
		\begin{equation}\label{***}
			\cO(f-f_\delta,B)<\omega_p(f,0)+\varepsilon+\fint_B\fint_B|f_\delta(x)-f_\delta(y)|\,d\mu(y)\,d\mu(x)\lesssim \omega_p(f,0)+\varepsilon
		\end{equation}
		for balls $B$ with $\rad(B)\le\delta.$
		
		We now consider the case where $B\subset X$ is a ball such that $\rad(B)>\delta.$  
		Let $I_B:=\{i\in\N:B\cap B_i\ne\varnothing\}$ and, for $i\in I_B,$ let $I_i:=\{j\in\N:2B_j\cap B_i\ne\varnothing\}.$ 
		From properties of the partition of unity, doubling, and H\"older's inequality, it follows that for $i\in I_B$,
		\begin{align*}
			\fint_{B_i}|f-f_\delta|\,d\mu=\fint_{B_i}|f-\sum_j f_{B_j}\varphi_j|\,d\mu&\le\sum_{j}\fint_{B_i}\varphi_j(x)|f(x)-f_{B_j}|\,d\mu(x)\\
			&\le\sum_{j\in I_i}\fint_{B_i}|f-f_{B_j}|\,d\mu\\
			&\lesssim \sum_{j\in I_i}\fint_{4B_i}|f-f_{4B_i}|\,d\mu\\
			&\lesssim \sum_{j\in I_i}\left(\fint_{4B_i}|f-f_{4B_i}|^p\,d\mu\right)^{1/p}\\
			&\lesssim \omega_p(f,0)+\varepsilon.
		\end{align*}
		Again, the last inequality follows from the choice of $\delta$ and the fact that there exists a constant $C\ge 1$, depending on $C_\mu$, such that $I_i$ contains at most $C$ elements by the bounded overlap property. 
		
		Note that $B\subset\bigcup_{i\in I_B}B_i\subset 2B,$ and so, by doubling and the fact that the collection $\{\frac{1}{5}B_i\}$ is disjoint, 
		\begin{align*}
			\fint_B|f-f_\delta|\,d\mu\le\sum_{i\in I_B}\frac{\mu(B_i)}{\mu(B)}\fint_{B_i}|f-f_\delta|\,d\mu&\lesssim\frac{\omega_p(f,0)+\varepsilon}{\mu(B)}\sum_{i\in I_B}\mu(B_i)\\&\lesssim \frac{\omega_p(f,0)+\varepsilon}{\mu(B)}\sum_{i\in I_B}\mu\left(\frac{1}{5}B_i\right)\\
			&\le\frac{\omega_p(f,0)+\varepsilon}{\mu(B)}\mu(2B)\\
			&\lesssim \omega_p(f,0)+\varepsilon.
		\end{align*}
		Hence, by \eqref{mean-constant},
		we have that 
		\begin{equation}\label{*****}
			\cO(f-f_\delta,B)\leq 2\fint_B|f-f_\delta|\,d\mu\lesssim \omega_p(f,0)+\varepsilon
		\end{equation}
		for balls $B$ such that $\rad(B)>\delta.$
		
		Combining \eqref{***} and \eqref{*****}, we have that
		\[
		\|f-f_\delta\|_{\BMO(X)}\lesssim  \omega_p(f,0)+\varepsilon.
		\]
		Since $f\in\BMO(X),$ the right-hand side is finite by Remark \ref{remark:john-nirenberg}, and so $f_\delta-f\in\BMO(X)$.  Hence $f_\delta=f+(f_\delta-f)\in\BMO(X).$
		Furthermore, since $f_\delta\in C_u(X)$ by Lemma~\ref{lem:UC}, it follows that 
		\[
		\dist(f, C_u(X)\cap\BMO(X))\lesssim \omega_p(f,0)+\varepsilon,
		\]
		and so taking $\varepsilon\to 0$ yields the desired result.
	\end{proof}
	
	Combining Lemmas~\ref{prop:If} and~\ref{prop:OnlyIf} yields the following characterization of $\VMO^p(X)$.
	
	\begin{theorem}\label{thm:VMOChar}
		For $1\le p<\infty$, $\VMO^p(X)=\overline{C_u(X)\cap\BMO(X)}$.
	\end{theorem}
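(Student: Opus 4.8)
The plan is to establish the equality of the two function spaces by proving the two set inclusions separately, each of which is precisely the content of one of the preceding lemmas. First I would invoke Lemma~\ref{prop:If}, which gives the inclusion $\overline{C_u(X)\cap\BMO(X)}\subset\VMO^p(X)$; this direction rests on the observation that a uniformly continuous $\BMO(X)$ function has small $p$-mean oscillation on small balls, so that $\omega_p(f,0)=0$ for $f\in C_u(X)\cap\BMO(X)$, combined with the fact that $\VMO^p(X)$ is closed in $\BMO(X)$ (Proposition~\ref{proposition:closed}), which lets us pass from the uniformly continuous functions to their closure.

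For the reverse inclusion, I would apply Lemma~\ref{prop:OnlyIf}, which gives $\VMO^p(X)\subset\overline{C_u(X)\cap\BMO(X)}$. This is the substantive direction: given $f\in\VMO^p(X)$, one approximates $f$ in the $\BMO(X)$-norm by its discrete convolutions $f_\delta$, which are uniformly continuous by Lemma~\ref{lem:UC}, and one shows that $\|f-f_\delta\|_{\BMO(X)}$ is controlled by $\omega_p(f,0)$, which vanishes as $\delta\to 0^+$. Combining the two inclusions yields the claimed equality $\VMO^p(X)=\overline{C_u(X)\cap\BMO(X)}$.

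Since both containments are already available as lemmas, the combination itself is immediate and presents no obstacle; the real difficulty lies entirely within Lemma~\ref{prop:OnlyIf}, where one must bound the mean oscillation of $f-f_\delta$ separately on balls of radius at most $\delta$ and on balls of radius exceeding $\delta$, using the discrete convolution machinery together with the bounded overlap of the cover. With those estimates in hand, the theorem is a formal double-inclusion argument that mirrors the classical theorem of Sarason, now transported to the doubling metric measure space setting with the vanishing-radius formulation of $\VMO^p(X)$.
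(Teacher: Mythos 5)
Your proposal is correct and matches the paper's argument exactly: the theorem is obtained by combining Lemma~\ref{prop:If} (which gives $\overline{C_u(X)\cap\BMO(X)}\subset\VMO^p(X)$) with Lemma~\ref{prop:OnlyIf} (which gives the reverse inclusion via discrete convolutions). Your summary of where the substantive work lies is also accurate.
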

	
	An immediate consequence of this result is the following.
	
	\begin{corollary}\label{cor:VMOp-VMO}
		For $1\le p<\infty$, $\VMO^p(X)=\VMO(X)$. 
	\end{corollary}
	

	
	\section{The Fractional Maximal Function Acting on Function Spaces}\label{sec:fs}
	
	As in the previous section, $(X,d,\mu)$ is a doubling metric measure space with doubling constant $C_\mu$. 
	For a function $f\in\Loneloc(X)$ and $\alpha\geq 0$, define the following maximal function:
	$$
	M^\alpha f(x)=\sup_{B\ni x}\rad(B)^\alpha\,\fint_{B}\!|f|\,d\mu\,,
	$$
	where the supremum is taken over all balls $B\subset X$ containing the point $x\in X$ satisfying $0<\rad(B)<2\,\diam(X)$. When $\alpha=0$, $Mf:=M^0f$ is the standard (uncentered) Hardy-Littlewood maximal function. When $\alpha>0$, then $M^\alpha f$ is known as the fractional maximal function. 
	It is easy to see that $M^\alpha f$ is lower semicontinuous and therefore measurable. For a general locally integrable function, though, nothing precludes $M^\alpha f$ from being identically infinite, even when $\alpha=0$. 
	
	For $\alpha>0$, we will require the additional assumption that $(X,d,\mu)$ satisfies a lower mass bound with exponent $Q$, see \eqref{LMB}, and will restrict the definition of $M^\alpha$ to $0<\alpha<Q$. For $\alpha=0$, the measure being doubling suffices. 
	
	An important feature of $M^\alpha$ is that it is pointwise sublinear. That is, for $f,g\in\Loneloc(X)$ and $x\in X$, $M^\alpha(f+g)(x)\leq M^\alpha f(x)+M^\alpha g(x)$. It follows from this that
	$$
	M^\alpha f(x)\leq M^\alpha(f-g)(x)+M^\alpha g(x)
	$$
	and 
	$$
	M^\alpha g(x)\leq M^\alpha(g-f)(x)+M^\alpha f(x)=M^\alpha(f-g)(x)+M^\alpha f(x),
	$$ and so, if $M^\alpha f(x)<\infty$ and $M^\alpha g(x)<\infty$, then
	\begin{equation}\label{contract}
		|M^\alpha f(x)-M^\alpha g(x)|\leq M^\alpha (f-g)(x). 
	\end{equation}
	It follows from Proposition \ref{prop:LocBound} that this inequality holds almost everywhere for $f,g\in L^p(X)$ for some $1\leq p<\infty$.
	
	\subsection{Lebesgue spaces}
	
	
	In order to study the behaviour of $M^\alpha$ on $\BMO(X)$ and $\VMO(X)$, we must first know its behaviour on $L^p(X)$ spaces. This is the content of the following theorem,
	which will be required in a subsequent section, the proof of which can be found in \cite[Section 3]{HKNT}. Their proof is for a slightly different fractional maximal function that is equivalent to the one discussed here in a doubling metric measure space; we reproduce the argument here in our precise setting for the benefit of the reader. 
	
	
	
	
	
	For simplicity of notation, we define a parameter $p^*$ that depends on $p,\alpha$, and $Q$. For $0\leq \alpha\leq Q/p$ and $1< p< \infty$, define $p\leq p^*\leq\infty$ by 
	\begin{equation}\label{eq:p-q}
		\frac{1}{p}-\frac{1}{p^*}=\frac{\alpha}{Q}\,.
	\end{equation}
	
	\begin{proposition}\label{prop:LocBound}
		Let $(X,d,\mu)$ be a doubling metric measure space, $\alpha\geq 0$, and $1\leq p<\infty$. When $\alpha>0$, we assume that $(X,d,\mu)$ satisfies a lower mass bound with exponent $Q$. 
		
		If $p=1$ and $0\leq \alpha<Q$, then
		$$
		\mu(\{x\in X: M^\alpha f(x)>t\})^{\frac{Q-\alpha}{Q}} \leq  \frac{C\,\|f\|_{L^1(X)}}{t}
		$$
		for all $f\in L^1(X)$, where $C\ge 1$ depends on $C_\mu$ and, if $0<\alpha<Q$, on $c_L$ and $\alpha$. That is, $M^\alpha$ is weak-type $(1,\frac{Q}{Q-\alpha})$. 
		
		If $1<p<\infty$ and $0\leq  \alpha\leq  Q/p$, then 
		$$
		\|M^\alpha f\|_{L^{p^*}(X)}\leq C\, \|f\|_{L^p(X)}\,
		$$
		for all $f\in L^p(X)$, where $C\ge 1$ depends on $C_\mu$ and $p$, and, if $0<\alpha<Q$, on $c_L$ and $\alpha$. That is, $M^\alpha$ is strong-type $(p,p^*)$. 
	\end{proposition}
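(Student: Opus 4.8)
The plan is to prove the two parts separately: I would establish the weak-type $(1,\tfrac{Q}{Q-\alpha})$ endpoint by a covering argument, and then deduce the strong-type $(p,p^*)$ bound from a pointwise Hedberg-type inequality together with the $L^p$-boundedness of the ordinary maximal function.

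For the weak-type bound, fix $t>0$ and $f\in L^1(X)$, and set $E_t=\{x\in X:M^\alpha f(x)>t\}$. For each $x\in E_t$ I would select, by definition of $M^\alpha$, a ball $B_x\ni x$ with $\rad(B_x)^\alpha\fint_{B_x}|f|\,d\mu>t$, which rearranges to $\mu(B_x)<t^{-1}\rad(B_x)^\alpha\int_{B_x}|f|\,d\mu$. When $\alpha>0$, inserting the lower mass bound $\mu(B_x)\geq c_L\rad(B_x)^Q$ yields both a uniform bound on the radii, namely $\rad(B_x)^{Q-\alpha}<\|f\|_{L^1(X)}/(c_Lt)$, and, after eliminating the radius, the estimate $\mu(B_x)\lesssim c_L^{-\alpha/(Q-\alpha)}t^{-Q/(Q-\alpha)}\bigl(\int_{B_x}|f|\,d\mu\bigr)^{Q/(Q-\alpha)}$. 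The bounded radii let me apply the $5r$-covering lemma to extract a countable pairwise disjoint subfamily $\{B_{x_i}\}$ with $E_t\subset\bigcup_i5B_{x_i}$, and doubling gives $\mu(5B_{x_i})\lesssim\mu(B_{x_i})$. Summing, using the elementary superadditivity inequality $\sum_i a_i^\gamma\leq(\sum_i a_i)^\gamma$ valid for $\gamma=\tfrac{Q}{Q-\alpha}\geq 1$ and $a_i\geq 0$, together with the disjointness bound $\sum_i\int_{B_{x_i}}|f|\,d\mu\leq\|f\|_{L^1(X)}$, produces $\mu(E_t)\lesssim c_L^{-\alpha/(Q-\alpha)}\bigl(\|f\|_{L^1(X)}/t\bigr)^{Q/(Q-\alpha)}$, and raising to the power $(Q-\alpha)/Q$ gives the claim. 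The case $\alpha=0$ is the classical weak-$(1,1)$ estimate: the exponent collapses to $1$, the lower mass bound is not needed, and the only care required is that on an unbounded space the radii need not be bounded, which I would handle by first estimating the maximal function truncated to radii at most $R$ and letting $R\to\infty$, the bound being uniform in $R$.

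For the strong-type bound with $1<p<\infty$, I would first record that $M=M^0$ is bounded on $L^p(X)$; this follows from the weak-$(1,1)$ estimate just proved (with $\alpha=0$), the trivial bound $\|Mf\|_{L^\infty(X)}\leq\|f\|_{L^\infty(X)}$, and Marcinkiewicz interpolation. The heart of the matter is then a pointwise Hedberg inequality, established for $0<\alpha<Q/p$ as follows. For any ball $B\ni x$ of radius $\rho$ I combine two estimates: the trivial $\fint_B|f|\,d\mu\leq Mf(x)$ gives $\rho^\alpha\fint_B|f|\,d\mu\leq\rho^\alpha Mf(x)$, while Hölder's inequality followed by the lower mass bound gives $\rho^\alpha\fint_B|f|\,d\mu\leq c_L^{-1/p}\|f\|_{L^p(X)}\rho^{\alpha-Q/p}$. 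The first bound increases in $\rho$ and the second decreases (since $\alpha<Q/p$), so taking the supremum over balls and optimizing the minimum of the two at their crossover radius yields $M^\alpha f(x)\lesssim\|f\|_{L^p(X)}^{\alpha p/Q}\,(Mf(x))^{1-\alpha p/Q}$, with constant depending only on $c_L,\alpha,Q$. Using the defining relation $\tfrac1p-\tfrac1{p^*}=\tfrac\alpha Q$, the exponents simplify to $1-\tfrac{\alpha p}{Q}=\tfrac{p}{p^*}$ and $\tfrac{\alpha p}{Q}=\tfrac{p^*-p}{p^*}$, so integrating the $p^*$-th power and invoking the $L^p$-boundedness of $M$ gives $\|M^\alpha f\|_{L^{p^*}(X)}^{p^*}\lesssim\|f\|_{L^p(X)}^{p^*-p}\|Mf\|_{L^p(X)}^p\lesssim\|f\|_{L^p(X)}^{p^*}$, as desired. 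The endpoint $\alpha=Q/p$ (where $p^*=\infty$) falls out even more directly: here the Hölder/lower-mass-bound estimate is already uniform in $\rho$, giving $\|M^\alpha f\|_{L^\infty(X)}\leq c_L^{-1/p}\|f\|_{L^p(X)}$, while $\alpha=0$ ($p^*=p$) is precisely the $L^p$ maximal theorem recorded above.

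I expect the main obstacle to be bookkeeping the dependence on the lower mass bound and getting the optimization exponents exactly right, rather than any single deep step. The superadditivity-of-powers step in the weak-type argument and the crossover optimization in Hedberg's inequality are where the constraints $0<\alpha<Q$ and $\alpha<Q/p$ are genuinely used, and care is needed so that the constants depend only on the advertised quantities $C_\mu$, $c_L$, $p$, and $\alpha$; I would also be careful, in the $\alpha=0$ unbounded case, that the $5r$-covering lemma is applied only to families of uniformly bounded radius.
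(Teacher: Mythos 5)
Your argument is correct, and the two halves compare differently to the paper's proof. For the strong-type bound your route is essentially the paper's: both hinge on the pointwise Hedberg inequality $M^\alpha f(x)\lesssim\|f\|_{L^p(X)}^{\alpha p/Q}Mf(x)^{1-\alpha p/Q}$ followed by integration and the $L^p$-boundedness of $M$ (the paper derives the pointwise bound by splitting $\fint_B|f|$ into the product of powers $\alpha p/Q$ and $1-\alpha p/Q$ and estimating each factor, rather than by your min-of-two-bounds crossover optimization, but these are the same computation, and your version makes explicit the $p^*=\infty$ endpoint that the paper leaves implicit). For the weak-type bound, however, the paper also runs everything through the Hedberg inequality with $p=1$: the level set $\{M^\alpha f>t\}$ is contained in a level set of $Mf$, and the classical weak-$(1,1)$ estimate for $M$ finishes the job in two lines. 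You instead prove the endpoint directly by a Vitali $5r$-covering argument adapted to the fractional setting, eliminating the radius via the lower mass bound and using superadditivity of $t\mapsto t^{Q/(Q-\alpha)}$. Your route is more self-contained (it reproves rather than invokes the covering machinery behind the maximal theorem) and correctly flags the one genuine technical point it raises, namely that the $5r$-covering lemma needs uniformly bounded radii, which your estimate $\rad(B_x)^{Q-\alpha}<\|f\|_{L^1(X)}/(c_Lt)$ supplies for $\alpha>0$ and a truncation-in-$R$ argument supplies for $\alpha=0$; the paper's route is shorter because it piggybacks on the already-cited Hardy--Littlewood--Wiener theorem in doubling spaces. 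Both yield the advertised dependence of the constants on $C_\mu$, $c_L$, $p$, and $\alpha$.
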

	
	\begin{proof}
		The case $\alpha=0$ is the Hardy-Littlewood-Weiner theorem. 
		
		For the case $\alpha>0$, take $f\in L^p(X)$ with $\|f\|_{L^p(X)}>0$, where for now $p$ may be anything in the interval $[1,\infty)$. We begin by deriving a pointwise inequality relating $M^\alpha f$ and $Mf$, the standard Hardy-Littlewood maximal function on $X$. To this end, fix $x\in X$ such that $Mf(x)<\infty$, and select a ball $B\ni x$. If $r$ is the radius of $B$, then an application of H\"older's inequality and the lower mass bound on $\mu$, see \eqref{LMB}, yields
		\begin{align*}
			r^\alpha \fint_{B}\!|f|\,d\mu&=r^\alpha\left(\fint_{B}\!|f|\,d\mu\right)^{\alpha p/Q}\left(\fint_{B}\!|f|\,d\mu\right)^{1-\alpha p/Q}
			\\&\leq \frac{r^\alpha}{\mu(B)^{\alpha p/Q}}\left(\int_{B}\!|f|\,d\mu\right)^{\alpha p / Q}\!Mf(x)^{1-\alpha p/Q}\\&\leq \frac{r^\alpha}{\mu(B)^{\alpha p/Q}}\left(\int_{X}\!|f|^p\,d\mu\right)^{\alpha / Q}\!\mu(B)^{(1-1/p)\alpha p/Q}\!Mf(x)^{1-\alpha p/Q}\\&= \left(\frac{r}{\mu(B)^{1/Q}}\right)^\alpha\|f\|_{L^p(X)}^{\alpha p /Q}Mf(x)^{1-\alpha p/Q}\\&\lesssim \|f\|_{L^p(X)}^{\alpha p /Q}Mf(x)^{1-\alpha p/Q}.
		\end{align*}
		The very first inequality is where we use that $0<\alpha\leq Q/p$ if $p\in(1,\infty)$ and $0<\alpha<Q$ if $p=1$. Taking a supremum over all $B\ni x$, we obtain the estimate 
		\begin{equation}\label{eq:pointwisecomparison}
			M^\alpha f(x)\lesssim \|f\|_{L^p(X)}^{\alpha p /Q}Mf(x)^{1-\alpha p/Q}.
		\end{equation}
		
		When $p=1$, this implies that
		$$
		\mu(\{M^\alpha f(x)>t\})\leq \mu\left(\left\{M f(x)> \frac{t^{\frac{Q}{Q-\alpha}}}{ C\,\|f\|_{L^1(X)}^{\frac{\alpha}{Q-\alpha}}}\right\}\right)\lesssim \frac{C\|f\|_{L^1(X)}^{\frac{\alpha}{Q-\alpha}}\|f\|_{L^1(X)}}{t^{\frac{Q}{Q-\alpha}}}=\left(\frac{C\,\|f\|_{L^1(X)}}{t}\right)^{\frac{Q}{Q-\alpha}}
		$$
		for any $0<\alpha<Q$. The boundedness of $M$ from $L^1(X)$ to weak-$L^1(X)$ was used in the second step. Raising both sides to the power of $\frac{Q-\alpha}{Q}$ gives the first result.
		
		For $p>1$, \eqref{eq:pointwisecomparison} implies that 
		$$
		\|M^\alpha f\|_{L^{p^*}(X)}^{p^*}\lesssim \|f||_{L^p(X)}^{\alpha p{p^*}/Q}\int_{X}\!(Mf)^{{p^*}-\alpha p{p^*}/Q}\,d\mu = \|f\|_{L^p(X)}^{{p^*}-p}\|Mf\|_{L^p(X)}^p\lesssim \|f\|_{L^p(X)}^{{p^*}},
		$$
		for any $0<\alpha\leq Q/p$, where ${p^*}$ is as in \eqref{eq:p-q}. The last step followed from the boundedness of $M$ on $L^p(X)$. The second result of the proposition follows from raising both sides to the power of $1/p^*$.
	\end{proof}
	
	\subsection{Bounded mean oscillation}
	
	In the Euclidean setting, it was first proven that $M$, taken with respect to cubes, is bounded from $\BMO$ to $\BLO$ \cite{ben} after the weaker statement that it is bounded from $\BMO$ to itself was shown in \cite{bdvs}. For alternative proofs and results for related maximal functions, see \cite{ak, cun,cf,ler,ou,saari}. The proof of the following theorem follows from an adaptation of the proof in the Euclidean case in \cite{ben}, as will be demonstrated below.

	\begin{theorem}\label{theorem:BMO}
		Let $(X,d,\mu)$ be a doubling metric measure space and $\alpha\geq 0$. When $\alpha>0$, we assume that $(X,d,\mu)$ is bounded with lower mass bound exponent $Q$. If $f\in\BMO(X)$ and $0\leq\alpha<Q$, then
		\begin{equation}\label{BLO}
			\fint_{B}\!M^\alpha f\,d\mu \leq C\left(\mu(X)^{\alpha/Q}+\diam(X)^\alpha\right)\,\|f\|_{\BMO(X)}+\essinf_{B}M^\alpha f,
		\end{equation}
		for all balls $B\subset X$, where $C>0$ depends on $C_\mu$ and $\alpha$. 
		
		In particular, if $\alpha=0$ and $M^\alpha f$ is not identically zero, or if $0<\alpha<Q$, it follows that
		$M^\alpha f \in\BLO(X)$ with
		$$
		\|M^\alpha f\|_{\BLO(X)}\leq C\left( \mu(X)^{\alpha/Q}+\diam(X)^\alpha \right)\,\|f\|_{\BMO(X)}. 
		$$
	\end{theorem}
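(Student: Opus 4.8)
The plan is to adapt Bennett's Euclidean argument by freezing a ball $B=B(x_B,r_B)$ and splitting the supremum defining $M^\alpha f$ into a \emph{local} part $M^\alpha_{\loc}f$, over balls $B'\ni x$ with $\rad(B')\le 2r_B$, and a \emph{global} part $M^\alpha_{\glob}f$, over balls with $\rad(B')>2r_B$, so that $M^\alpha f=\max(M^\alpha_{\loc}f,M^\alpha_{\glob}f)$ on $B$. Since every local ball meeting $B$ lies inside $5B$, the local part sees only $f|_{5B}$, so I would introduce $h:=(f-f_{5B})\chi_{5B}$ and record that $\|h\|_{L^1(X)}=\int_{5B}|f-f_{5B}|\,d\mu\le\mu(5B)\|f\|_{\BMO(X)}\lesssim\mu(B)\|f\|_{\BMO(X)}$ by doubling (note $\mu(5B)<\infty$ since the space is bounded, and for $\alpha>0$ the lower mass bound needed for Proposition \ref{prop:LocBound} is available, as bounded doubling spaces satisfy \eqref{LMB}). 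The reason for subtracting $f_{5B}$ is that the leftover constant can be absorbed into the essential infimum with a harmless factor: because $5B$ is an admissible ball containing every $w\in B$, one has $(5r_B)^\alpha|f_{5B}|\le(5r_B)^\alpha\fint_{5B}|f|\,d\mu\le M^\alpha f(w)$, whence $(2r_B)^\alpha|f_{5B}|\le(2/5)^\alpha\,\essinf_B M^\alpha f$.

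For the global part I would fix $x,z\in B$ and, given a global ball $B'=B(y,\rho)\ni x$ with $\rho>2r_B$, compare it with $B'':=B(y,\rho+2r_B)$, which contains $B'$ and $z$ and satisfies $B'\subset B''\subset 2B'$. Using $\rad(B')\le\rad(B'')$, doubling, and the bounded-ratio instance of Lemma \ref{lem:jones} giving $|\,(|f|)_{B'}-(|f|)_{B''}|\lesssim\||f|\|_{\BMO(X)}\le 2\|f\|_{\BMO(X)}$, together with $\rad(B'')<2\rho\le 4\diam(X)$, I expect $\rad(B')^\alpha\fint_{B'}|f|\le\rad(B'')^\alpha\fint_{B''}|f|+C\diam(X)^\alpha\|f\|_{\BMO(X)}\le M^\alpha f(z)+C\diam(X)^\alpha\|f\|_{\BMO(X)}$, hence $M^\alpha_{\glob}f(x)\le M^\alpha f(z)+C\diam(X)^\alpha\|f\|_{\BMO(X)}$. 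Combining the two parts through the maximum yields, for a.e.\ $x\in B$ and any fixed $z\in B$, the additive pointwise bound $M^\alpha f(x)\le M^\alpha h(x)+M^\alpha f(z)+C\diam(X)^\alpha\|f\|_{\BMO(X)}$.

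It then remains to average over $x\in B$ and control $\fint_B M^\alpha h\,d\mu$. For $0<\alpha<Q$ this is exactly where Proposition \ref{prop:LocBound} enters: since $h\in L^1(X)$ and $M^\alpha$ is weak-type $(1,Q/(Q-\alpha))$ with $Q/(Q-\alpha)>1$, integrating the distributional estimate over the finite-measure set $B$ gives $\int_B M^\alpha h\,d\mu\lesssim\|h\|_{L^1(X)}\mu(B)^{\alpha/Q}$, so $\fint_B M^\alpha h\,d\mu\lesssim\mu(B)^{\alpha/Q}\|f\|_{\BMO(X)}\le\mu(X)^{\alpha/Q}\|f\|_{\BMO(X)}$. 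Taking $z$ to approach a point where $M^\alpha f$ is within $\eps$ of $\essinf_B M^\alpha f$ and letting $\eps\to 0$ produces \eqref{BLO}; taking the supremum over $B$ gives the $\BLO(X)$ estimate, the finiteness required for $\BLO$ membership coming from \eqref{BLO} itself once $M^\alpha f\not\equiv\infty$, guaranteed by Proposition \ref{prop:LocBound}.

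I expect the main obstacle to be keeping the coefficient of $\essinf_B M^\alpha f$ equal to $1$: this forces the global comparison to be genuinely \emph{additive}, controlling $\rad(B')^\alpha\fint_{B'}|f|-\rad(B'')^\alpha\fint_{B''}|f|$ rather than their ratio, and forces the leftover constant $(2r_B)^\alpha|f_{5B}|$ to be absorbed into the infimum with constant $\le 1$, which is precisely why subtracting $f_{5B}$ and exploiting the admissibility of $5B$ are essential. A second difficulty is the endpoint $\alpha=0$, where $Q/(Q-\alpha)=1$ and the weak-type integration above diverges logarithmically; there the local estimate $\fint_B Mh\,d\mu\lesssim\|f\|_{\BMO(X)}$ must instead be extracted from the John--Nirenberg inequality (Remark \ref{remark:john-nirenberg}), which places $h$ in $L\log L$ on $5B$ with the correct norm, recovering the classical input of \cite{ben}. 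Finally, I would dispatch the routine but fiddly admissibility issues near the top scale — replacing $5B$ by $X$ when $5r_B\ge 2\diam(X)$, and handling $r_B\ge\diam(X)$ (so that $B=X$ and there are no global balls) by comparing against balls of radius approaching $2\diam(X)$.
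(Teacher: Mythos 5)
Your proposal is correct and follows the same Bennett-style architecture as the paper --- a near/far splitting relative to the fixed ball $B$, an integrated $L^p$-type estimate for the mean-zero local piece, and an additive pointwise comparison for the far piece that is absorbed into $\essinf_B M^\alpha f$ with coefficient $1$ --- but the two halves are executed differently. Where you decompose the \emph{operator} by a radius threshold and then subtract the constant $f_{5B}$ (absorbing the leftover $(2r_B)^\alpha|f_{5B}|$ into the infimum via the admissibility of $5B$), the paper decomposes the \emph{function}, $f=g+h$ with $g=(f-f_{2B})\chi_{2B}$ and $h=f_{2B}\chi_{2B}+f\chi_{(2B)^c}$, and uses sublinearity; these are interchangeable. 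The more substantive differences: (i) for the local average, you use the weak-type $(1,Q/(Q-\alpha))$ bound and a layer-cake integration over $B$, which is fine for $\alpha>0$ but degenerates as $\alpha\to0$, forcing the separate $L\log L$/John--Nirenberg argument you sketch at the endpoint; the paper instead applies the strong $(p,p^*)$ bound of Proposition \ref{prop:LocBound} with $p=2Q/(Q+\alpha)\in(1,2]$ together with H\"older and $\BMO^p\cong\BMO$, which treats $0\le\alpha<Q$ in one stroke --- adopting that choice of exponent would let you delete your endpoint case entirely; (ii) for the global part, you enlarge $B'=B(y,\rho)$ to $B''=B(y,\rho+2r_B)$ and invoke Lemma \ref{lem:jones} for the bounded-ratio pair, whereas the paper compares any test ball meeting $(2B)^c$ with $8\widetilde B\supset \widetilde B\cup 2B$ by a direct triangle-inequality computation, avoiding Lemma \ref{lem:jones} and most of the top-scale admissibility fiddling (balls of radius near $2\diam(X)$) that your route must, and does, acknowledge. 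Both versions keep the coefficient of $\essinf_B M^\alpha f$ equal to $1$, which is the point of the $\BLO$ conclusion.
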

	
	
	\begin{remark}
		When $\mu$ is doubling and satisfies an annular decay property, a proof of the $\alpha=0$ case can be found in \cite{kurk}, where the author uses arguments related to Muckenhoupt weights. The proof in the present paper improves upon this result by showing that the annular decay property is not necessary. Note that annular decay {\emph{is}} necessary to show that $M^\alpha$ maps $\VMO(X)$ to itself, see the next subsection.
	\end{remark}
	
	
	\begin{proof}[Proof of Theorem \ref{theorem:BMO}]
		Fix $0\leq \alpha<Q$, $f\in\BMO(X)$, and a ball $B\subset X$. We decompose $f=g+h$, where $g=(f-f_{2B})\chi_{2B}$ and $h=f_{2B}\chi_{2B}+f\chi_{(2B)^c}$. 
		
		We begin by estimating the mean of $M^\alpha g$ on $B$. Set $p=2Q/(Q+\alpha)\in (1,2]$ so that $p^*=2Q/(Q-\alpha)\in [2,\infty)$. Then $0\leq \alpha\leq Q/p$, and so we may apply Proposition \ref{prop:LocBound} along with H\"older's inequality to see that
		\begin{equation}\label{local}
			\begin{aligned}
				\fint_{B}\!M^\alpha g\,d\mu \leq \frac{1}{\mu(B)^{1/{p^*}}}\|M^\alpha g\|_{L^{p^*}(X)}
				\lesssim \frac{1}{\mu(B)^{1/{p^*}}}\|g\|_{L^p(X)}
				& =\mu(B)^{\alpha/Q}\left(\fint_{2B}\!|f-f_{2B}|^p\,d\mu \right)^{1/p}\\&\lesssim \mu(B)^{\alpha/Q}\|f\|_{\BMO(X)}\,,
			\end{aligned}
		\end{equation}
		where we used Remark \ref{remark:john-nirenberg} in the last step.
		
		Next we obtain a pointwise bound on $M^\alpha h$ on $B$. To this end, fix $x_0\in B$ and another ball $\widetilde{B}$ such that $\widetilde{B}\ni x_0$. Denote by $r$ the radius of $B$ and by $\tilde{r}$ the radius of $\widetilde{B}$. If $\widetilde{B}\subset 2B$, then $\tilde{r}\leq 2r$ and
		\begin{equation}\label{local-global}
			\tilde{r}^\alpha\fint_{\widetilde{B}}\!|h|\,d\mu = \tilde{r}^\alpha|f_{2B}|\leq (2r)^\alpha\fint_{2B}|f|\,d\mu\leq M^\alpha f(x)
		\end{equation}
		for any $x\in B$. If $\widetilde{B}\cap (2B)^c\neq\emptyset$, we consider $8\widetilde{B}$, which contains both $\widetilde{B}$ and $2B$ and, by the doubling property, satisfies $\mu(8\widetilde{B})\leq C_\mu^3 \mu(\widetilde{B})$. Then, by the triangle inequality,
		$$
		\tilde{r}^\alpha\fint_{\widetilde{B}}\!|h|\,d\mu\leq \tilde{r}^\alpha\fint_{\widetilde{B}}\!|h-f_{8\widetilde{B}}|\,d\mu+ \tilde{r}^\alpha\fint_{8\widetilde{B}}\!|f|\,d\mu\leq C_\mu^3\,\tilde{r}^\alpha\fint_{8\widetilde{B}}\!|h-f_{8\widetilde{B}}|\,d\mu + M^\alpha f(x)\,,
		$$
		again, where $x$ is any point in $B$. Writing $8\widetilde{B}$ as the union of $2B$ and $8\widetilde{B}\cap(2B)^c$, we have that
		\begin{align*}
			\int_{8\widetilde{B}}\!|h-f_{8\widetilde{B}}|\,d\mu &= \int_{2B}\!|h-f_{8\widetilde{B}}|\,d\mu + \int_{8\widetilde{B}\cap(2B)^c}\!|h-f_{8\widetilde{B}}|\,d\mu\\&= \int_{2B}\!|f_{2B}-f_{8\widetilde{B}}|\,d\mu + \int_{8\widetilde{B}\cap(2B)^c}\!|f-f_{8\widetilde{B}}|\,d\mu\\&\leq \int_{2B}\!|f-f_{8\widetilde{B}}|\,d\mu+\int_{8\widetilde{B}\cap(2B)^c}\!|f-f_{8\widetilde{B}}|\,d\mu=\int_{8\widetilde{B}}\!|f-f_{8\widetilde{B}}|\,d\mu\leq\mu(8\widetilde{B})\|f\|_{\BMO(X)}\,,
		\end{align*}
		and so it follows that
		\begin{equation}\label{global-global}
			\tilde{r}^\alpha\fint_{\widetilde{B}}\!|h|\,d\mu\leq C_\mu^3\diam(X)^\alpha \|f\|_{\BMO(X)}+M^\alpha f(x)
		\end{equation}
		Combining equations~\eqref{local-global} and~\eqref{global-global} yield a pointwise estimate for $M^\alpha h(x_0)$
		$$
		M^\alpha h(x_0)\leq C_\mu^3\diam(X)^\alpha \|f\|_{\BMO(X)}+M^\alpha f(x)\,.
		$$
		Taking the essential infimum over all $x\in B$ and then taking an average over all $x_0\in B$ yields 
		\begin{equation}
			\fint_{B}\!M^\alpha h\,d\mu \leq C_\mu^3\diam(X)^\alpha \|f\|_{\BMO(X)}+\essinf_{B}M^\alpha f\,. 
		\end{equation}
		Combining this with~\eqref{local}, the pointwise sublinearity of the fractional maximal function implies the desired inequality~\eqref{BLO}. 
	\end{proof}
	
	\subsection{Vanishing mean oscillation}
	
	In this subsection, we consider the action of $M^\alpha$ on the subspace $\VMO(X)$. As the norm on $\VMO(X)$ is the same as that of $\BMO(X)$, we already know that the $\BMO$-norm of the image of a $\VMO(X)$ function under $M^\alpha$ is controlled. The goal here is to show that $M^\alpha$ preserves the vanishing mean oscillation condition itself.
	
	The first result in this direction can be found in \cite{shahab}, where the author considers $\alpha=0$ on $X=\R^n$ with the Euclidean metric and Lebesgue measure. We generalize this to other values of $\alpha$ and to a class of doubling metric measure spaces. 
	
	To accomplish this in the following theorem, we impose an additional hypothesis compared to Theorem \ref{theorem:BMO}: an annular decay condition, see Definition \ref{def:ann-decay}. This is a necessary assumption for the theorem to hold, as follows from \cite[Example 5.2]{hlnt}. Modifying this example by restricting $X$ to be bounded yields a bounded doubling metric measure space with lower mass bound exponent 1 that does not satisfy an annular decay property. On this space, the authors consider a bounded Lipschitz (hence $\VMO$) function $f$ such that $M^\alpha f$ has a jump discontinuity and so cannot be in $\VMO$.
	
	\begin{theorem}\label{theorem:VMO}
		Let $(X,d,\mu)$ be a doubling metric measure space satisfying a $\beta$-annular decay property for $0<\beta\leq 1$, and $\alpha\geq 0$. When $\alpha>0$, we assume that $(X,d,\mu)$ is bounded with lower mass bound exponent $Q$. Let $f\in\VMO(X)$. If $\alpha=0$ and $M^\alpha f$ is not identically infinite, or if $0<\alpha<Q$, then $M^\alpha f\in\VMO(X)$. 
	\end{theorem}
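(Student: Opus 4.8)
The plan is to show directly that $\omega(M^\alpha f,r)\to 0$ as $r\to 0^+$; we already know from Theorem \ref{theorem:BMO} that $M^\alpha f\in\BLO(X)\subset\BMO(X)$, so only the vanishing of the oscillation at small scales is at issue. Fix a small ball $B=B(x_0,\rho)$ and a large parameter $C\geq 1$ to be sent to infinity at the very end. The key device is to split every ball competing in the supremum defining $M^\alpha f$ according to whether its radius is at most $C\rho$ (the \emph{local} part) or exceeds $C\rho$ (the \emph{non-local} part), writing $M^\alpha f=\max(M^\alpha_{\mathrm{loc}}f,\,M^\alpha_{\mathrm{nl}}f)$. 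Since $s\mapsto\max(a,s)$ is $1$-Lipschitz, the oscillation of a maximum is controlled by the sum of the oscillations, so that $\cO(M^\alpha f,B)\lesssim \cO(M^\alpha_{\mathrm{loc}}f,B)+\cO(M^\alpha_{\mathrm{nl}}f,B)$, and I would estimate the two pieces by entirely different mechanisms: the vanishing mean oscillation of $f$ for the local part, and the annular decay property for the non-local part.

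For the local part, every ball of radius at most $C\rho$ meeting $B$ is contained in a fixed dilate $B^\flat=B(x_0,(2C+1)\rho)$, whose measure is comparable to that of $B$ with constant depending only on $C$ and $C_\mu$. Decomposing $f=(f-f_{B^\flat})\chi_{B^\flat}+h^\flat$, the function $h^\flat$ is constant on $B^\flat$, so $M^\alpha_{\mathrm{loc}}h^\flat$ is constant on $B$; using the pointwise sublinearity of $M^\alpha$ together with \eqref{contract}, the local oscillation is then bounded by $2\fint_B M^\alpha[(f-f_{B^\flat})\chi_{B^\flat}]\,d\mu$. This is precisely the term estimated in the proof of Theorem \ref{theorem:BMO}: by the strong-type bound of Proposition \ref{prop:LocBound} and Hölder's inequality it is $\lesssim_C \mu(B)^{\alpha/Q}\,\cO_p(f,B^\flat)\lesssim_C \omega_p(f,(2C+1)\rho)$, which tends to $0$ as $\rho\to 0$ for fixed $C$ since $f\in\VMO(X)=\VMO^p(X)$ by Corollary \ref{cor:VMOp-VMO}.

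The non-local part is where the annular decay property of Definition \ref{def:ann-decay} is indispensable, and I expect it to be the main obstacle. For $x,y\in B$ and a competitor $\widetilde B=B(w,\sigma)\ni x$ with $\sigma>C\rho$, the concentric enlargement $B(w,\sigma+d(x,y))$ contains $y$ and is still admissible for $M^\alpha_{\mathrm{nl}}f(y)$; since $d(x,y)<2\rho<(2/C)\sigma$, the annular decay estimate bounds the resulting change in $\fint_{\widetilde B}|f|\,d\mu$ by a factor $\lesssim (d(x,y)/\sigma)^\beta\lesssim C^{-\beta}$ times that average. This yields only \emph{relative} (multiplicative) control, $|M^\alpha_{\mathrm{nl}}f(x)-M^\alpha_{\mathrm{nl}}f(y)|\lesssim C^{-\beta}M^\alpha_{\mathrm{nl}}f(x)$, and the real difficulty is converting this into smallness of the absolute oscillation. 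When $\alpha>0$ and $X$ is bounded this is clean: the factor $\rad(\widetilde B)^\alpha$ forces $\rad(\widetilde B)^\alpha\fint_{\widetilde B}|f|\,d\mu\to 0$ as $\rad(\widetilde B)\to 0$ (via Lemma \ref{lem:jones}, since $\sigma^\alpha\log(1/\sigma)\to 0$) while keeping it bounded for radii bounded below, so that $M^\alpha f\in\Linfty(X)$; hence $\cO(M^\alpha_{\mathrm{nl}}f,B)\lesssim C^{-\beta}\|M^\alpha f\|_{\Linfty(X)}$. For $\alpha=0$ the maximal function need not be bounded, and here I would instead compare $M^\alpha_{\mathrm{nl}}f$ with its essential infimum on $B$, using the finiteness hypothesis on $Mf$ and its $\BLO$ membership to keep the relevant quantities under control.

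Combining the two estimates gives $\omega(M^\alpha f,\rho)\lesssim_C \omega_p(f,(2C+1)\rho)+C^{-\beta}\|M^\alpha f\|_{\Linfty(X)}$ for all sufficiently small $\rho$; letting $\rho\to 0^+$ kills the first term for each fixed $C$, and then letting $C\to\infty$ kills the second, giving $\omega(M^\alpha f,0)=0$ and hence $M^\alpha f\in\VMO(X)$. The delicate point throughout is the behaviour at the transition scale $\sigma\sim\rho$, where annular decay degrades to an $O(1)$ factor; the two-parameter split at radius $C\rho$ is engineered precisely so that these comparable-scale balls fall into the local bucket, where the $\VMO$ modulus of $f$ absorbs them, rather than into the non-local one. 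I expect the conversion of the relative annular-decay estimate into absolute smallness — immediate once one observes $M^\alpha f\in\Linfty(X)$ for $\alpha>0$, but genuinely requiring the $\essinf$ comparison for $\alpha=0$ — to be the step demanding the most care.
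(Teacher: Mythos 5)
Your overall architecture is the same as the paper's: split $M^\alpha f=\max\{M^\alpha_{\loc},M^\alpha_{\glob}\}$ at a scale $\lambda\rho$ (your $C\rho$), kill the local part using the $\VMO$ modulus of $f$ together with the strong-type bound of Proposition \ref{prop:LocBound}, kill the non-local part using annular decay, and send $\rho\to 0$ before $\lambda\to\infty$. Your treatment of the local part is essentially identical to the paper's Lemma \ref{lem:LocOsc} and is correct. For the non-local part with $\alpha>0$ your route is genuinely different and, as far as I can see, valid: the observation that $M^\alpha f\in L^\infty(X)$ when $\alpha>0$, $X$ is bounded and $f\in\BMO(X)$ (via Lemma \ref{lem:jones} and $\sigma^\alpha\log(1/\sigma)\to 0$) lets you get away with the crude estimate $|f|_{B_x}-|f|_{B_y}\le\frac{\mu(B_y\setminus B_x)}{\mu(B_y)}\,|f|_{B_x}\lesssim C^{-\beta}|f|_{B_x}$, avoiding the paper's covering of the annulus by $r$-balls and the logarithmic mean-comparison. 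That is a legitimate simplification for $\alpha>0$.

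The genuine gap is in the case $\alpha=0$, where $X$ need not be bounded and, even when it is, $Mf$ need not be in $L^\infty(X)$ for $f\in\VMO(X)$ (unbounded $\VMO$ functions exist, and $Mf\ge |f|$ a.e.). Your relative estimate $|M_{\glob}f(x)-M_{\glob}f(y)|\lesssim C^{-\beta}M_{\glob}f(x)$ then gives $\cO(M_{\glob}f,B)\lesssim C^{-\beta}\fint_B M_{\glob}f\le C^{-\beta}\bigl(\essinf_B Mf+\|Mf\|_{\BLO(X)}\bigr)$, and the term $C^{-\beta}\essinf_B Mf$ is \emph{not} small uniformly over small balls $B$, since $\essinf_B Mf$ can be arbitrarily large near points where $Mf$ blows up; neither $\BLO$ membership nor finiteness of $Mf$ repairs this. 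The loss occurs exactly where you discard cancellation: writing $|f|_{B_x}-|f|_{B_y}=\frac{1}{\mu(B_x)}\int_{B_y\setminus B_x}(|f|_{B_y}-|f|)\,d\mu$, you bound the integrand by $|f|_{B_y}$ instead of by the deviation of $|f|$ from $|f|_{B_y}$. The paper's Lemma \ref{lem:NonLocOsc} keeps that deviation, covers the thin annulus $B_y\setminus B_x$ by balls of radius $r$, and uses Lemma \ref{lem:jones} to compare the means on those small balls with $f_{B_y}$; combined with annular decay this yields the absolute bound $\cO(M^\alpha_{\glob}f,B)\lesssim(\lambda^{-\beta}\log(C\lambda)+\lambda^{-\beta})\|f\|_{\BMO(X)}$, which is independent of the size of $Mf$ and is what the $\alpha=0$ case actually requires. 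You would need to import that argument (or an equivalent oscillation-based estimate) to close your proof for $\alpha=0$; once you do, the same absolute bound of course also covers $\alpha>0$ without your $L^\infty$ detour.
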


	
	We note that in the case of small positive $\alpha$ (in particular $0<\alpha<\beta$) and $X$ compact, this result can be obtained as a corollary of \cite[Theorem 3.1]{hlnt}. Indeed, it is shown that for any $f\in\BMO(X)$, $M^\alpha f$ is continuous. In the compact setting, continuous functions are automatically uniformly continuous and bounded, hence in BMO, and so $M^\alpha f$ is in $\VMO(X)$ by Lemma \ref{prop:If} and Corollary \ref{cor:VMOp-VMO}. This is stronger than Theorem \ref{theorem:VMO} as it shows that all $\BMO(X)$ functions map to $\VMO(X)$, but Theorem \ref{theorem:VMO} has the benefit of not requiring compactness of $X$ and holding for a larger interval of $\alpha$. In particular, one could apply it to an open bounded domain in $\R^n$. The proof of Theorem \ref{theorem:VMO} can be found at the end of the present subsection.
	
	
	Fix a ball $B\subset X$ with $\rad(B)=r$ and a scale $\lambda\geq 1$. We write 
	$$
	M^\alpha f=\max\{ M^\alpha_{\loc,\lambda\,r}f,M^\alpha_{\glob,\lambda\,r}f \},
	$$
	where $M^\alpha_{\loc,\lambda\,r}f$ is the ``local" part of $M^\alpha f$ given by 
	$$
	M^\alpha_{\loc,\lambda\,r}f(x)=\sup_{\widetilde{r}< \lambda\,r} \widetilde{r}^\alpha\fint_{B(\widetilde{r})}|f|\,d\mu,
	$$
	where the supremum is taken over all balls $B(\widetilde{r})$ with radius $\widetilde{r}<\lambda\,r$, and $M^\alpha_{\glob,\lambda\,r}f$ is the ``global" counterpart given by 
	$$
	M^\alpha_{\glob,\lambda\,r}f(x)=\sup_{\widetilde{r}\geq \lambda\,r} \widetilde{r}^\alpha\fint_{B(\widetilde{r})}|f|\,d\mu,
	$$
	where the supremum is taken over all balls $B(\widetilde{r})$ with radius $\widetilde{r}\geq \lambda\,r$. The goal is to estimate the dependence on $\lambda$ of the mean oscillation of the local and global parts. We begin with the local part. 
	
	\begin{lemma}\label{lem:LocOsc} 
		Let $(X,d,\mu)$ be a doubling metric measure space and $\alpha\geq 0$. If $\alpha>0$, we assume additionally that $(X,d,\mu)$ is bounded with lower mass bound exponent $Q$. Given a ball $B\subset X$ with $\rad(B)=r$, $\lambda\ge 1$, $f\in L^1_{\loc}(X)$, and $0\leq\alpha<Q$, there exists a constant $C$, depending on $C_\mu$ and $\alpha$, such that
		\[
		\cO(M^{\alpha}_{\loc,\lambda\,r}f,B)\le C\lambda^{Q/p}\mu(B)^{\alpha/Q}\cO_p(f,3\lambda B),
		\]
		where $p=2Q/(Q+\alpha)$. 
	\end{lemma}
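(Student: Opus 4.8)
The plan is to exploit that, at points of $B$, the local maximal function depends only on the values of $f$ in a fixed dilate of $B$, and then to reduce the oscillation estimate to the $L^p\to L^{p^*}$ bound of Proposition \ref{prop:LocBound} applied to the oscillation of $f$ on that dilate.

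First I would record the localization. If $x\in B=B(x_0,r)$ and a ball $B(\tilde r)\ni x$ has radius $\tilde r<\lambda r$, then every point of $B(\tilde r)$ lies within distance $2\tilde r+r<(2\lambda+1)r\le 3\lambda r$ of $x_0$ (using $\lambda\ge 1$), so $B(\tilde r)\subset 3\lambda B$. Consequently, for every $x\in B$ one has $M^\alpha_{\loc,\lambda r}f(x)=M^\alpha_{\loc,\lambda r}(f\chi_{3\lambda B})(x)$, since restricting $f$ to $3\lambda B$ alters none of the admissible averages.

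Next I would subtract the correct constant. Set $c=f_{3\lambda B}$ and $g=(f-c)\chi_{3\lambda B}$, so that $f\chi_{3\lambda B}=g+c\chi_{3\lambda B}$. For $x\in B$, every admissible ball is contained in $3\lambda B$, hence $\fint_{B(\tilde r)}|c\chi_{3\lambda B}|\,d\mu=|c|$, and therefore $M^\alpha_{\loc,\lambda r}(c\chi_{3\lambda B})(x)$ equals a constant $K$ independent of $x\in B$. Since $M^\alpha_{\loc,\lambda r}$ is pointwise sublinear, the local analogue of the contraction inequality \eqref{contract} gives $|M^\alpha_{\loc,\lambda r}f(x)-K|\le M^\alpha_{\loc,\lambda r}g(x)$ for a.e. $x\in B$ (the finiteness needed to invoke it holds because $g\in L^p(X)$ whenever $\cO_p(f,3\lambda B)<\infty$, the only case to treat). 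Using that the mean oscillation is at most twice the average deviation from any constant, together with $M^\alpha_{\loc,\lambda r}g\le M^\alpha g$, I obtain
\[
\cO(M^\alpha_{\loc,\lambda r}f,B)\le 2\fint_B|M^\alpha_{\loc,\lambda r}f-K|\,d\mu\le 2\fint_B M^\alpha g\,d\mu.
\]
Finally I would feed this into the Lebesgue-space bound. With $p=2Q/(Q+\alpha)\in(1,2]$ and $p^*=2Q/(Q-\alpha)$ one checks $0\le\alpha\le Q/p$, so Proposition \ref{prop:LocBound} applies; combined with H\"older's inequality on $B$ it yields $\fint_B M^\alpha g\,d\mu\le\mu(B)^{-1/p^*}\|M^\alpha g\|_{L^{p^*}(X)}\lesssim\mu(B)^{-1/p^*}\|g\|_{L^p(X)}$. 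As $g$ is supported in $3\lambda B$, $\|g\|_{L^p(X)}=\mu(3\lambda B)^{1/p}\cO_p(f,3\lambda B)$, and using $1/p-1/p^*=\alpha/Q$ from \eqref{eq:p-q} the measure factor becomes $\mu(B)^{\alpha/Q}\big(\mu(3\lambda B)/\mu(B)\big)^{1/p}$. Applying the relative lower mass bound \eqref{RLMB} to the concentric pair $B(x_0,r)\subset B(x_0,3\lambda r)$ gives $\mu(3\lambda B)/\mu(B)\lesssim\lambda^Q$, whence the claimed estimate, with constant depending only on $C_\mu$ and $\alpha$.

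The main obstacle I anticipate is the bookkeeping in the middle step: one must verify that $M^\alpha_{\loc,\lambda r}(c\chi_{3\lambda B})$ is genuinely constant on $B$, so that the mean-oscillation reduction against a single constant $K$ is legitimate, and that the local operator inherits the contraction inequality of the full maximal function together with the a.e. finiteness required to use it. Once these are in place, the remaining steps are routine applications of H\"older's inequality, Proposition \ref{prop:LocBound}, and the doubling property.
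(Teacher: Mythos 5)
Your argument is correct and follows essentially the same route as the paper: localize $M^{\alpha}_{\loc,\lambda r}$ to $3\lambda B$, subtract the (constant) local maximal function of a constant via the contraction inequality, and then apply Proposition~\ref{prop:LocBound} together with H\"older's inequality and doubling. The only cosmetic difference is that you subtract $f_{3\lambda B}$ where the paper subtracts $f_{\lambda B}$, which spares you the paper's final doubling comparison of $\fint_{3\lambda B}|f-f_{\lambda B}|^p\,d\mu$ with $\cO_p(f,3\lambda B)^p$.
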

	
	\begin{proof}
		For simplicity of notation, we suppress the dependence of $M^{\alpha}_{\loc,\lambda\,r}f$ on $\lambda$ and $r$. By \eqref{mean-constant}
		and the definition of $M^\alpha_{\loc}$, we have that 
		$$
		\cO(M^\alpha_{\loc}f,B)\le 2\fint_{B}\!|M^\alpha_{\loc} f-(\lambda\,r)^\alpha |f_{\lambda B}||\,d\mu= 2\fint_{B}\!|M^\alpha_{\loc} f-M_{\loc}^\alpha(f_{\lambda B})|\,d\mu.
		$$
		Notice that $M^\alpha_{\loc} f=M^\alpha_{\loc} (f\chi_{3\lambda B})$ and $M^\alpha_{\loc} f_{\lambda B}=M^\alpha_{\loc} (f_{\lambda B}\chi_{3\lambda B})$ on $B$ as points outside of $3\lambda B$ cannot be reached by balls of radius at most $\lambda r$. Therefore, by \eqref{contract}, we have that
		$$
		\cO(M^\alpha_{\loc}f,B)\leq 2\fint_{B}\!|M^\alpha_{\loc} (f\chi_{3\lambda B})-M_{\loc}^\alpha(f_{\lambda B}\chi_{3\lambda B})|\,d\mu\leq 2\fint_{B}\!M^\alpha_{\loc} [(f-f_{\lambda B})\chi_{3\lambda B}]\,d\mu.
		$$
		
		Write $g=(f-f_{\lambda B})\chi_{3\lambda B}$. Now, for $p$ as in the statement of the lemma, we have that $0\leq\alpha\leq Q/p$, and so we may apply Proposition~\ref{prop:LocBound} and H\"{o}lder's inequality to see that
		$$
		\cO(M^\alpha_{\loc}f,B)\leq \frac{1}{\mu(B)^{1/p^*}}\|M^\alpha g\|_{L^{p^*}(X)}\lesssim \frac{1}{\mu(B)^{1/p^*}}\|g\|_{L^{p}(X)}= \frac{\mu(3\lambda B)^{1/p}}{\mu(B)^{1/p^*}}\left(\fint_{3\lambda B}\!|f-f_{\lambda B}|^p\,d\mu\right)^{1/p}\!.
		$$
		Recall that $p^*=\frac{pQ}{Q-\alpha p}$. From the doubling of $\mu$, it follows that $\frac{\mu(3\lambda B)^{1/p}}{\mu(B)^{1/p^*}}\lesssim \lambda^{Q/p}\mu(B)^{1/p-1/p^*}=\lambda^{Q/p}\mu(B)^{\alpha/Q}$. Also from doubling it follows that
		\begin{align*}
			\fint_{3\lambda B}\!|f-f_{\lambda B}|^p\,d\mu&\lesssim \fint_{3\lambda B}\fint_{\lambda B}\!|f(x)-f(y)|^p\,d\mu(y)d\mu(x)\\ &\lesssim \fint_{3\lambda B}\fint_{3\lambda B}\!|f(x)-f(y)|^p\,d\mu(y)d\mu(x)\lesssim \fint_{3\lambda B}\!|f-f_{3\lambda B}|^p\,d\mu.
		\end{align*}
		Therefore, as desired, 
		$$
		\cO(M^\alpha_{\loc}f,B)\lesssim \lambda^{Q/p}\mu(B)^{\alpha/Q}\, \cO_p(f,3\lambda B)\,.
		$$
		
	\end{proof}
	
	
	%
	
	Now we continue to estimating the mean oscillation of the global part of the fractional maximal function. The following proof is inspired by \cite[Theorem 3.1]{hlnt} and the techniques go back as far as \cite[Section 3]{McMan}.
	
	\begin{lemma}\label{lem:NonLocOsc}
		
		Let $(X,d,\mu)$ be a doubling metric measure space and $\alpha\geq 0$. If $\alpha>0$, we assume additionally that $(X,d,\mu)$ is bounded with lower mass bound exponent $Q$. Given a ball $B\subset X$ with $\rad(B)=r$, $\lambda\ge 4$, $f\in \BMO(X)$, and $0\leq\alpha<Q$, there exists a constant $C$, depending on $C_\mu$, $C_\beta$, $\alpha$, and $\beta$ such that
		\[
		\cO(M^{\alpha}_{\glob,\lambda\,r}f,B)\le C\diam(X)^\alpha(\lambda^{-\beta}\log(C\lambda)+\lambda^{-\beta})\|f\|_{\BMO(X)}.
		\]
	\end{lemma}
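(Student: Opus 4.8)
The plan is to reduce the estimate to a uniform pointwise oscillation bound and to extract the decay in $\lambda$ from the annular decay property combined with a John--Nirenberg estimate at an optimized exponent. Writing $h:=M^\alpha_{\glob,\lambda r}f$, I would first use $\cO(h,B)\le\fint_B\fint_B|h(x)-h(y)|\,d\mu(y)\,d\mu(x)$, so that it suffices to bound $h(x)-h(y)$ uniformly for $x,y\in B$. Given such $x,y$ and a ball $\tilde B=B(z,\tilde r)\ni x$ with $\tilde r\ge\lambda r$ that nearly realizes $h(x)$, I would compare with the concentric ball $\tilde B'=B(z,\tilde r+d(x,y))\ni y$. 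One first checks admissibility of $\tilde B'$ for the global maximal function at $y$: since $\lambda\ge 4$ we may assume $\lambda r<2\diam(X)$ (otherwise $h\equiv 0$), forcing $r<\diam(X)/2$; if $\tilde r>\diam(X)$ then already $y\in\tilde B$ and $h(x)-h(y)\le 0$, while otherwise $\tilde r+d(x,y)\le\diam(X)+2r<2\diam(X)$. Thus in the nontrivial case $(\tilde r+d(x,y))^\alpha\fint_{\tilde B'}|f|\,d\mu\le h(y)$, giving $h(x)-h(y)\le D:=\tilde r^\alpha\fint_{\tilde B}|f|\,d\mu-(\tilde r+d(x,y))^\alpha\fint_{\tilde B'}|f|\,d\mu$.

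The crucial algebraic step is to decompose $D$ so that the (possibly very large) size of $f$ cancels. Setting $m:=\fint_{\tilde B'}|f|\,d\mu$, $A:=\tilde B'\setminus\tilde B$, and $\tilde r':=\tilde r+d(x,y)$, I would write
\begin{align*}
D=\underbrace{\big(\tilde r^\alpha-(\tilde r')^\alpha\big)m}_{\le 0}+\tilde r^\alpha\frac{\mu(A)}{\mu(\tilde B')}\big(\textstyle\fint_{\tilde B}|f|\,d\mu-m\big)-\frac{\tilde r^\alpha}{\mu(\tilde B')}\int_A(|f|-m)\,d\mu.
\end{align*}
Since $\tilde r\le\tilde r'$, the first term is nonpositive and may be discarded for the upper bound; this is exactly where the global average $m$ — which is \emph{not} controlled by $\|f\|_{\BMO(X)}$ — disappears (for constant $f$ all three terms vanish identically, consistently with $\cO(h,B)=0$).

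It then remains to bound the two annulus terms. For the first, annular decay gives $\mu(A)/\mu(\tilde B')\le C_\beta(d(x,y)/\tilde r)^\beta\le C_\beta(2/\lambda)^\beta$, while $|\fint_{\tilde B}|f|\,d\mu-m|\le(\mu(\tilde B')/\mu(\tilde B))\,\cO(|f|,\tilde B')\lesssim\|f\|_{\BMO(X)}$ by doubling; together with $\tilde r\le 2\diam(X)$ this yields a clean $C\diam(X)^\alpha\lambda^{-\beta}\|f\|_{\BMO(X)}$. For the second term I would apply H\"older's inequality with an exponent $p$ and the John--Nirenberg inequality (Remark \ref{remark:john-nirenberg}), using $\cO_p(|f|,\tilde B')\le 2\,\cO_p(f,\tilde B')\lesssim p\,\|f\|_{\BMO(X)}$, to get
\[
\frac{\tilde r^\alpha}{\mu(\tilde B')}\int_A\big||f|-m\big|\,d\mu\lesssim \diam(X)^\alpha\Big(\tfrac{\mu(A)}{\mu(\tilde B')}\Big)^{1/p'}\!\cO_p(|f|,\tilde B')\lesssim \diam(X)^\alpha\,\lambda^{-\beta/p'}\,p\,\|f\|_{\BMO(X)}.
\]
Optimizing over $p$ — the choice $p\sim\log\lambda$ balances $\lambda^{-\beta/p'}=\lambda^{-\beta}\lambda^{\beta/p}$ against the linear growth in $p$ coming from John--Nirenberg — produces the factor $\lambda^{-\beta}\log(C\lambda)$. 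Summing the two annulus estimates and taking the supremum over $x,y\in B$ gives the claim.

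The main obstacle is precisely the interplay flagged in the second paragraph: a naive pointwise estimate does not decay in $\lambda$, because the thin annulus $A$ can carry a full unit of oscillation and because $\fint_{\tilde B}|f|\,d\mu$ is unbounded in terms of $\|f\|_{\BMO(X)}$. Both difficulties are resolved simultaneously by the sign bookkeeping — discarding the nonpositive radius term to kill the global size of $f$ — and by trading the annular decay exponent $\beta$ against the John--Nirenberg growth through the optimized exponent $p\sim\log\lambda$, which is the genuine source of the $\log(C\lambda)$ in the statement.
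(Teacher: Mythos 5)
Your argument is correct and reaches the stated bound, but it obtains the logarithm by a genuinely different mechanism than the paper. The common core is identical: pick a near-optimal ball $B(z,\tilde r)$ for $x$ with $\tilde r\ge\lambda r$, enlarge it concentrically to capture $y$, observe that the difference of the two competitors reduces (after discarding a nonpositive term carrying the uncontrolled global average of $|f|$ -- the paper does this via $(\tilde r+2r)^\alpha\ge\tilde r^\alpha$, you via the explicit three-term decomposition, which I checked is algebraically exact) to an integral of $\bigl||f|-|f|_{B_y}\bigr|$ over the thin annulus, and use annular decay to make the annulus small. Where you diverge is in estimating that annulus integral: the paper covers the annulus by balls of radius comparable to $r$, applies the chaining estimate of Lemma \ref{lem:jones} to compare $f_{2B_i}$ with $f_{B_y}$ at a cost of $\log(C\tilde r/r)$, and sums using disjointness and annular decay; you instead apply H\"older with exponent $p$, pay $(\mu(A)/\mu(\tilde B'))^{1/p'}\lesssim\lambda^{-\beta/p'}$ from annular decay, pay $Cp\|f\|_{\BMO(X)}$ from the quantitative John--Nirenberg bound $\cO_p(f,B)\le Cp\|f\|_{\BMO(X)}$, and optimize $p\sim\log\lambda$. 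Both routes are valid; yours avoids the covering argument and Lemma \ref{lem:jones} entirely, at the price of needing the sharp linear-in-$p$ dependence in John--Nirenberg, which is strictly more than the qualitative equivalence $\BMO^p(X)\cong\BMO(X)$ recorded in Remark \ref{remark:john-nirenberg} -- though it does follow from the exponential John--Nirenberg inequality in doubling spaces that the paper cites, so you should state that you are invoking that quantitative form. Your admissibility check for the enlarged ball (radius $<2\diam(X)$) is a detail the paper glosses over, and handling it as you do is fine. One cosmetic remark: your first annulus term and the paper's ``$C(2r/\tilde r)^\beta\|f\|_{\BMO(X)}$'' term play the same role and both yield the separate $\lambda^{-\beta}$ summand in the statement.
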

	
	\begin{proof}
		As before, we suppress the dependence of the global fractional maximal function on the scale $\lambda$ and $r$. 
		
		Fix $\eps>0$ and consider $x,y\in B$. Without loss of generality, assume that $M_{\glob}^\alpha f(x)\ge M_{\glob}^\alpha f(y).$  Let $B_x=B(\xtilde,\rtilde)\subset X$ be such that $x\in B_x$, $\rtilde\geq \lambda r,$ and 
		\[
		M^\alpha_{\glob}f(x)\le \rtilde^\alpha\fint_{B_x}|f|\,d\mu+\varepsilon.
		\]
		Now let $B_y=B(\xtilde,\rtilde+2r).$  Then $y\in B_y$, and so we have that 
		$$
		M^\alpha_{\glob}f(x)-M^\alpha_{\glob}f(y)\le \rtilde^\alpha\fint_{B_x}\!|f|\,d\mu+\eps-(\rtilde+2r)^\alpha\fint_{B_y}\!|f|\,d\mu\le \rtilde^\alpha(|f|_{B_x}-|f|_{B_y})+\varepsilon.
		$$

		By doubling and the $\beta$-annular decay property,
		$$
		\frac{\mu(B_y\setminus B_x)}{\mu(B_x)}\lesssim \Big(\frac{2r}{\widetilde r}\Big)^\beta\frac{\mu(B_y)}{\mu(B_x)}\lesssim \Big(\frac{2r}{\widetilde r}\Big)^\beta
		$$
		and so 
		\begin{align*}
			|f|_{B_x}-|f|_{B_y}&=\frac{1}{\mu(B_x)}\left(\int_{B_x}|f|\,d\mu-\int_{B_y}|f|\,d\mu+(\mu(B_y)-\mu(B_x))|f|_{B_y}\right)\\
			&=\frac{1}{\mu(B_x)}\int_{B_y\setminus B_x}(|f|_{B_y}-|f|)\,d\mu\\
			&\le\frac{1}{\mu(B_x)}\int_{B_y\setminus B_x}\fint_{B_y}|f(z)-f_{B_y}|+|f(w)-f_{B_y}|d\mu(z)d\mu(w)\\
			&\leq \frac{1}{\mu(B_x)}\Bigg(\int_{B_y\setminus B_x}|f-f_{B_y}|\,d\mu+\mu(B_y\setminus B_x)\fint_{B_y}|f-f_{B_y}|d\mu\Bigg)\\
			&\le\frac{1}{\mu(B_x)}\int_{B_y\setminus B_x}|f-f_{B_y}|\,d\mu+C\Big(\frac{2r}{\widetilde r}\Big)^\beta\|f\|_{\BMO(X)}
		\end{align*}
		for some constant $C$.
		Therefore, it follows that that 
		\begin{equation}\label{eq:MacManusTrick}
			M^\alpha_{\glob}f(x)-M^\alpha_{\glob}f(y)\le\rtilde^\alpha\Bigg(\frac{1}{\mu(B_x)}\int_{B_y\setminus B_x}|f-f_{B_y}|\,d\mu+C\Big(\frac{r}{\widetilde r}\Big)^\beta\|f\|_{\BMO(X)}\Bigg)+\varepsilon.
		\end{equation}
		
		Now let $B_1,\dots,B_k$ be a maximal collection of disjoint balls of radius $r$ with centers in $B_y\setminus B_x.$  Then, 
		\[
		B_y\setminus B_x\subset\bigcup_i2B_i\subset\Delta:=B(\xtilde,\rtilde+4r)\setminus B(\xtilde,\rtilde-2r)\,.
		\]
		Note that we have insisted that $\lambda\ge4$, which ensures that $\widetilde r>4r.$  This in turn ensures that the annulus $\Delta$ above is sufficiently thin.
		
		For $i\in\{1,\dots,k\},$ we have from Lemma \ref{lem:jones} that 
		\begin{align}\label{eq:LogTerm}
			\fint_{2B_i}|f-f_{B_y}|\,d\mu&\le\fint_{2B_i}|f-f_{2B_i}|\,d\mu+|f_{2B_i}-f_{2B_y}|+|f_{2B_y}-f_{B_y}|\nonumber\\
			&\le\|f\|_{\BMO(X)}+C\log\left(\frac{C\rtilde}{r}\right)\|f\|_{\BMO(X)}\le C\log\left(\frac{C\rtilde}{r}\right)\|f\|_{\BMO(X)}.
		\end{align}
		
		By the $\beta$-annular decay property and doubling, we also have that 
		\begin{equation}\label{eq:AnnularDecayTerm}
			\mu(\Delta)\le C\mu(B(\xtilde,\rtilde+4r))\left(\frac{6r}{\rtilde+4r}\right)^\beta\lesssim \mu(B_x)\left(\frac{r}{\rtilde}\right)^\beta.
		\end{equation}
		
		Using \eqref{eq:LogTerm}, doubling, disjointness of $\{B_i\},$ and \eqref{eq:AnnularDecayTerm}, we come to
		\begin{align*}
			\frac{1}{\mu(B_x)}\int_{B_y\setminus B_x}|f-f_{B_y}|d\mu&\le\frac{1}{\mu(B_x)}\sum_{i}\mu(2B_i)\fint_{2B_i}|f-f_{B_y}|\,d\mu\\
			&\le \frac{C}{\mu(B_x)}\log\left(\frac{C\rtilde}{r}\right)\|f\|_{\BMO(X)}\sum_{i}\mu(2B_i)\\
			&\lesssim \frac{C}{\mu(B_x)}\log\left(\frac{C\rtilde}{r}\right)\|f\|_{\BMO(X)}\sum_{i}\mu(B_i)\\
			&\le \frac{C}{\mu(B_x)}\log\left(\frac{C\rtilde}{r}\right)\|f\|_{\BMO(X)}\,\mu(\Delta)\\
			&\lesssim C\left(\frac{r}{\rtilde}\right)^\beta\log\left(\frac{C\rtilde}{r}\right)\|f\|_{\BMO(X)}.
		\end{align*}
		Note that the function $t\mapsto t^{-\beta}\log(Ct)$ is decreasing in $t$, and so $\rtilde>\lambda r$ implies that
		$$
		\left(\frac{r}{\rtilde}\right)^\beta\log\left(\frac{C\rtilde}{r}\right)\leq \lambda^{-\beta}\log(C\lambda).
		$$ 
		Likewise, $(r/\rtilde)^\beta<\lambda^{-\beta}.$
		Therefore, combining the above estimate with \eqref{eq:MacManusTrick}, it follows that 
		\[
		|M^\alpha_{\glob}f(x)-M^\alpha_{\glob}f(y)|\le C\rtilde^\alpha\|f\|_{\BMO(X)}(\lambda^{-\beta}\log(C\lambda)+\lambda^{-\beta})+\eps.
		\] 
		Recall that without loss of generality, we have assumed $M^\alpha_{\glob}f(x)\ge M^\alpha_{\glob}f(y).$  Hence, taking $\eps\to 0,$ we obtain
		\begin{align*}
			\cO(M_{\glob}^{\alpha}f,B)&\le\fint_{B}\fint_{B}\!|M_{\glob}^{\alpha}f(x)-M_{\glob}^{\alpha}f(y)|\,d\mu(y)d\mu(x)\\
			&\leq C\diam(X)^\alpha(\lambda^{-\beta}\log(C\lambda)+\lambda^{-\beta})\|f\|_{\BMO(X)}.\qedhere
		\end{align*}
	\end{proof}
	
	With the separate estimates for the local and global parts of the fractional maximal function in hand, we are now able to provide a quantitative estimate of the mean oscillation of $M^\alpha f$ in terms of a scale $\lambda$. This is the essence of the following proof.
	
	\begin{proof}[Proof of Theorem \ref{theorem:VMO}]
		Let $f\in\VMO(X)$ and fix a ball $B\subset X$ with $\rad(B)=r$. If $\alpha=0$, we assume that $M^\alpha f$ is not identically infinite. For any $\lambda\geq 1$, we have that 
		\[M^\alpha f=\max\left\{M^{\alpha}_{\loc,\lambda r}f,M^{\alpha}_{\glob,\lambda r}f\right\},\] and so it follows that 
		\begin{equation}\label{max-osc}
			\cO(M^\alpha f,B)\leq \cO(M^\alpha_{\loc,\lambda r} f,B)+\cO(M^\alpha_{\glob,\lambda r} f,B),
		\end{equation}
		see, for instance, \cite[Proposition 6.2]{dg}. 
		
		Let $\eps>0$. We begin with the global part. Using Lemma \ref{lem:NonLocOsc} and the fact that $t^{-\beta}\log(Ct)\to0$ and $t^{-\beta}\to 0$ as $t\to\infty$, we can choose $\lambda\geq 4$ sufficiently large such that
		\[
		\cO(M^\alpha_{\glob,\lambda r} f,B)\leq C\diam(X)^\alpha(\lambda^{-\beta}\log(C\lambda)+\lambda^{-\beta})\|f\|_{\BMO(X)}<\frac{\varepsilon}{2}.
		\]
		Note that this choice of $\lambda\ge 4$ is independent of $B$.
		
		For the local part, we use the fact that $f\in\VMO(X)$, and hence in $\VMO^p(X)$ by Corollary \ref{cor:VMOp-VMO} with $p=2Q/(Q+\alpha)\in (1,2]$, to claim that there exists a radius $\rho>0$ such that 
		$$
		\cO_p(f,\widetilde{B})<\frac{\varepsilon}{2C\lambda^{Q/p}\mu(X)^{\alpha/Q}}
		$$
		for any ball $\widetilde{B}$ with radius $\rad(\widetilde{B})<\rho$. Here $C$ is the constant from Lemma \ref{lem:LocOsc}. Thus, by the very same lemma, if $\rad(B)<\rho/(3\lambda)$, then
		$$
		\cO(M^\alpha_{\loc,\lambda r} f,B)\leq C\lambda^{Q/p}\mu(X)^{\alpha/Q}\,\cO_p(f,3\lambda B)<\frac{\eps}{2}\,.
		$$
		
		
		Using \eqref{max-osc}, now, we have that for all $\eps>0$ there exists a radius $\widetilde{r}=\rho/(3\lambda)>0$ so that for any ball $B$ with $\rad(B)<\widetilde{r}$, we have that $\mathcal{O}(M^\alpha f,B)<\eps$. This is to say that $M^\alpha f\in\VMO(X)$.

	\end{proof}
	
	\subsection{Euclidean space}
	
	In this subsection, we consider the case when $X$ is a a finite cube $Q_0\subset\R^n$ with sides parallel to the axes. We consider $\R^n$ with the Euclidean metric and Lebesgue measure. 
	
	The space $\BMO(Q_0)$ has two possible definitions. The definition as in this paper is given by a bounded mean oscillation condition over all sets of the form $B(x,r)\cap Q_0$, where $x\in Q_0$ and $B(x,r)$ is the Euclidean ball centered at $x$ of radius $r$. This space can easily be seen to be isomorphic, up to a constant depending on $n$, to the one given by a bounded mean oscillation condition over all sets of the form $Q(x,\ell)\cap Q_0$, where $x\in Q_0$ and $Q(x,\ell)$ is the cube centered at $x$ with sides of length $\ell$ parallel to the axes.
	
	The second, and more natural, definition for $\BMO(Q_0)$ is one given by a bounded mean oscillation condition over subcubes $Q$ of $Q_0$. Such sets are a special case of the former, and so this second definition of $\BMO(Q_0)$ results in a space no smaller than the first. In fact, the opposite inclusion is true as well. For any $Q(x,\ell)\cap Q_0$ where $x\in Q_0$, it contains a subcube $Q_1$ of $Q_0$ of side length $\min\{\ell/2,\ell_0\}$ and is contained inside a subcube $Q_2$ of $Q_0$ of side length $\min\{\ell,\ell_0\}$. Here, $\ell_0$ is the side length of $Q_0$. It follows, see for instance \cite[Property O7]{BDG2} that for any locally integrable function on $Q_0$,
	$$
	\mathcal{O}(f, Q(x,\ell)\cap Q_0)\leq \frac{|Q_2|}{|Q_1|}\,\mathcal{O}(f, Q_2)\leq 2^n \,\mathcal{O}(f, Q_2),
	$$
	and so the two notions of $\BMO(Q_0)$ coincide with norms that are equivalent up to a dimension-dependent constant.
	
	The same arguments show that $\VMO(Q_0)$ as defined here coincides with the subspace of $\BMO(Q_0)$ of functions that having vanishing mean oscillation along subcubes of $Q_0$ with vanishing side length. As Lebesgue measure is Ahlfors $n$-regular and exhibits a $1$-annular decay property, we therefore have the following Euclidean corollary:
	
	\begin{corollary}\label{cor:Euclid}
		Let $0\leq \alpha<n$ and $f\in\BMO(Q_0)$. Then, $M^\alpha f \in \BLO(Q_0)$ with $\|M^\alpha f\|_{\BLO(Q_0)}\leq C\|f\|_{\BMO(Q_0)}$, where $C$ depends on $n$ and $\alpha$. Moreover, if $f\in\VMO(Q_0)$, then $M^\alpha f\in\VMO(Q_0)$.
	\end{corollary}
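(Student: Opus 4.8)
The plan is to realize the fixed cube $Q_0$, equipped with the Euclidean metric and Lebesgue measure, as a bounded doubling metric measure space and then invoke Theorem \ref{theorem:BMO} and Theorem \ref{theorem:VMO} directly. Here the ambient metric measure space is $X=Q_0$ itself, so that balls are the sets $B(x,r)\cap Q_0$ with $x\in Q_0$, and $M^\alpha$ is precisely the maximal function built from these balls. The four things I would verify are: (i) Lebesgue measure restricted to $Q_0$ is doubling and $Q_0$ is bounded; (ii) it satisfies a lower mass bound with exponent $Q=n$ (indeed it is Ahlfors $n$-regular); (iii) it satisfies a $1$-annular decay property in the sense of Definition \ref{def:ann-decay}; and (iv) the ball-based spaces $\BMO(Q_0)$, $\BLO(Q_0)$, $\VMO(Q_0)$ of this paper agree, with comparable norms, with the usual cube-based spaces.

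For (ii) the upper Ahlfors bound is immediate, since $\mu(B(x,r)\cap Q_0)\le\mu(B(x,r))=c_n r^n$. For the lower bound I would use the convexity of $Q_0$: for $x\in Q_0$ and $0<r\le\diam(Q_0)$, a fixed fraction of $B(x,r)$ (worst case at a corner, giving at least $2^{-n}$) lies inside $Q_0$, so $\mu(B(x,r)\cap Q_0)\gtrsim r^n$ with constant depending only on $n$; for $\diam(Q_0)<r<2\diam(Q_0)$ the intersection is all of $Q_0$, whose measure is comparable to $\diam(Q_0)^n\gtrsim r^n$. This yields (ii) with dimensional constants, and in particular the doubling in (i). For (iii), the cleanest route is to observe that $Q_0$ is convex, hence geodesic, hence a length space; as Lebesgue measure is doubling, the annular decay property with $\beta=1$ follows from \cite[Corollary~2.2]{buck} (cf.\ the Remark following Definition \ref{def:ann-decay}). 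Alternatively, one computes directly from $R^n-r^n\le n\,\tfrac{R-r}{R}\,R^n$, handling the intersection with $Q_0$ via the same convexity estimate.

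For (iv) I would appeal to the discussion immediately preceding the statement: the comparison $\cO(f,Q(x,\ell)\cap Q_0)\le 2^n\,\cO(f,Q_2)$ shows the ball and cube versions of $\BMO(Q_0)$ coincide up to a dimensional constant, and the same nesting argument identifies $\VMO(Q_0)$ with vanishing mean oscillation along subcubes of vanishing side length (and $\BLO(Q_0)$ analogously). With (i)--(iv) in place, Theorem \ref{theorem:BMO} applied to $X=Q_0$ and $0\le\alpha<n=Q$ gives $M^\alpha f\in\BLO(Q_0)$ with $\|M^\alpha f\|_{\BLO(Q_0)}\le C(\mu(Q_0)^{\alpha/n}+\diam(Q_0)^\alpha)\|f\|_{\BMO(Q_0)}$; since $Q_0$ is fixed, the bracketed factor is a constant depending on $n$, $\alpha$, and the side length, which I absorb into $C$. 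For the $\alpha=0$ caveat I note that on the bounded space $Q_0$ every $\BMO(Q_0)$ function is integrable, so $Mf$ is finite almost everywhere and not identically infinite. Theorem \ref{theorem:VMO}, which additionally needs the $1$-annular decay from (iii), then delivers the $\VMO(Q_0)\to\VMO(Q_0)$ conclusion.

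The main obstacle, such as it is, is bookkeeping rather than analysis: the two invoked theorems already carry all the real content, so the task reduces to checking that the cube genuinely satisfies their hypotheses \emph{uniformly} over all $x\in Q_0$ (corners and edges being the delicate points for the lower mass bound) and for the full admissible range $0<r<2\diam(Q_0)$, and that the paper's ball-based function spaces coincide with the classical cube-based ones. Each of these is geometrically routine, so I expect no serious difficulty beyond making the uniform constants explicit.
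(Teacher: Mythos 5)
Your proposal is correct and follows essentially the same route as the paper: the paper's own justification consists precisely of the preceding discussion identifying the ball-based and cube-based versions of $\BMO(Q_0)$ and $\VMO(Q_0)$ (via the nesting estimate $\cO(f,Q(x,\ell)\cap Q_0)\le 2^n\,\cO(f,Q_2)$), together with the observation that Lebesgue measure on $Q_0$ is Ahlfors $n$-regular and satisfies a $1$-annular decay property, after which Theorems \ref{theorem:BMO} and \ref{theorem:VMO} apply with $Q=n$. Your additional verifications (the corner-case lower mass bound, the finiteness of $Mf$ for $\alpha=0$ on the bounded space) are exactly the bookkeeping the paper leaves implicit.
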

	
	
	
	\section{Continuity}\label{sec:cont}
	
	From \eqref{contract}, it is easy to see that if $M^\alpha$ is bounded on a function space $Y$ for which $|f|\leq |g|$ almost everywhere implies that $\|f\|_{Y}\leq\|g\|_{Y}$, then it is also continuous on $Y$. For instance, the $L^p$-norm and the $L^{p,\infty}$-quasinorm both satisfy such a property, and so it follows from Proposition \ref{prop:LocBound} that $M^\alpha$ is continuous from $L^1(X)$ to $L^{\frac{Q}{Q-\alpha},\infty}(X)$ for $0\leq\alpha<Q$ and from $L^p(X)$ to $L^{p^*}(X)$ for $1<p<\infty$ and $0\leq\alpha\leq Q/p$. 
	
	The same argument cannot be carried through for the $\BMO$-seminorm, however. In this section, we give examples that illustrate the lack of continuous action of maximal functions on $\BMO$ and $\VMO$.
	
	\begin{example}[Discontinuity of $M$ in the unbounded case]
		Let $X=(0,\infty)\subset\R$ with the Lebesgue measure and the Euclidean metric. In this example, we show that $M$ is not continuous on $\VMO(X)$, and thus neither on $\BMO(X)$.
		
		
		Define $f:X\to\R$ by 
		\[
		f(x)=
		\begin{cases}
			1-x,&0<x\le 1\\
			0,&1<x.
		\end{cases}
		\]
		For $n\in\N$, let $f_n=f-n.$ Then $f,f_n\in\VMO(X)$, and since $f_n$ and $f$ differ by a constant, we immediately have that $f_n\to f$ in $\BMO(X).$  However, it follows that 
		\[
		|f_n(x)|=
		\begin{cases}
			n-1+x,&0<x\le 1\\
			n,&1<x.
		\end{cases}
		\]
		By taking the supremum over arbitrarily large balls, it follows that $Mf_n\equiv n$ on $X.$  However, $Mf$ is not equal to a constant almost everywhere on $X,$ and so we have that $$\|Mf_n-Mf\|_{\BMO(X)}=\|Mf\|_{\BMO(X)}>0.$$ 
		Therefore, $Mf_n$ does not converge to $Mf$ in $\BMO(X)$, and so $M$ is not continuous on $\VMO(X).$
	\end{example}
	
	\begin{example}[Discontinuity of $M$ in the bounded case]
		Let $X=(0,2)\subset\R$, again with the Lebesgue measure and the Euclidean metric. Modifying the above example, we now show that $M$ is not continuous on $\VMO(X)$, and thus neither on $\BMO(X)$.
		
		Let $f:X\to\R$ be given by 
		\[
		f(x)=
		\begin{cases}
			0,&0< x\le1\\
			x-1,&1<x<2.
		\end{cases}
		\]
		For $n\in\N,$ let $f_n=f-n.$  As above, $f_n\to f$ in $\BMO(X)$ and $f,f_n\in\VMO(X).$  For $x\in(0,1)$, $1<\delta<2,$ and $0<\tau<x$, we have that 
		\[
		0<\fint_{(\tau,\delta)}|f|\le\fint_{(x,\delta)}|f|=\frac{(\delta-1)^2}{2(\delta-x)}.
		\]  
		Hence by continuity of the integral and the fact that $f=0$ on $(0,1)$, we have that, 
		\[
		Mf(x)=\sup_{1<\delta<2}\,\sup_{0<\tau<x}\fint_{(\tau,\delta)}|f|=\sup_{1<\delta<2}\fint_{(x,\delta)}|f|=\sup_{1<\delta<2}\frac{(\delta-1)
			^2}{2(\delta-x)}=\frac{1}{2(2-x)},
		\]
		and so $Mf$ is not constant almost everywhere on $(0,1).$
		
		Now, we have that 
		\[
		|f_n(x)|=
		\begin{cases}
			n,&0<x\le 1\\
			n+1-x&1<x<2,
		\end{cases}
		\]
		and so for $x\in(0,1),$ it follows that $Mf_n(x)=n.$  Hence,
		\begin{align*}
			\|Mf_n-Mf\|_{\BMO(X)}&\ge\cO(Mf_n-Mf,(0,1))\\
			&=\cO(n-Mf,(0,1))=\cO(Mf,(0,1))>0.
		\end{align*}
		Therefore, $Mf_n$ does not converge to $Mf$ in $\BMO(X),$ and so $M$ is not continuous on $\VMO(X).$
	\end{example}

	\begin{example}[Discontinuity of $M^\alpha$ in the bounded case]
		Let $X=(0,2)\subset\R$, again with the Lebesgue measure and the Euclidean metric. Continuing with the above example, we now show that $M^\alpha$ is not continuous on $\VMO(X)$, and thus neither on $\BMO(X)$.
		
		
		Let $f$ and $f_n$ be as in the previous example, and let $0<\alpha<1.$  Then as above, $f,f_n\in\VMO(X)$ and $f_n\to f$ in $\BMO(X).$  We first note that $M^\alpha f$ is not constant on the interval $(0,1).$  Indeed for $x\in(0,1)$, $1<\delta<2,$ and $0<\tau<x,$ we have that 
		\begin{align*}
			0<\left(\frac{\delta-\tau}{2}\right)^\alpha\fint_{(\tau,\delta)}|f|&=\frac{1}{2^\alpha(\delta-\tau)^{1-\alpha}}\int_{(\tau,\delta)}|f|.
		\end{align*}
		The right hand side of this expression is increasing in $\tau.$   
		As such, by continuity of the integral, and the fact that $f$ is zero on the interval $(0,1),$ it follows that,  
		\begin{align*}
			M^\alpha f(x)&=\sup_{1<\delta<2}\,\sup_{0<\tau<x}\left(\frac{\delta-x}{2}\right)^\alpha\fint_{(\tau,\delta)}|f|\\
			&=\sup_{1<\delta<2}\left(\frac{\delta-x}{2}\right)^\alpha\fint_{(x,\delta)}|f|=\sup_{1<\delta<2}\frac{(\delta-1)^2}{2^{\alpha+1}(\delta-x)^{1-\alpha}}=\frac{1}{2^{\alpha+1}(2-x)^{1-\alpha}},
		\end{align*}
		and so $M^\alpha f$ is not constant almost everywhere on $(0,1).$
		
		Now, for $x\in (0,1),$ $0<\tau<x$, and $x<\delta\le 1$, we have that 
		\begin{align*}
			\left(\frac{\delta-\tau}{2}\right)^\alpha\fint_{(\tau,\delta)}|f_n|=\left(\frac{\delta-\tau}{2}\right)^\alpha n\le\left(\frac{1-\tau}{2}\right)^\alpha n=\left(\frac{1-\tau}{2}\right)^\alpha\fint_{(\tau,1)}|f_n|.
		\end{align*}
		This follows since $|f_n|=n$ on $(0,1].$  Likewise for $1\le\delta<2$, we have that 
		\begin{align*}
			\left(\frac{\delta-\tau}{2}\right)^\alpha\fint_{(\tau,\delta)}|f_n|\le
			\left(\frac{\delta}{2}\right)^\alpha\fint_{(0,\delta)}|f_n|&=\frac{1}{2^\alpha\delta^{1-\alpha}}\left(n\delta-(\delta-1)^2/2\right).
		\end{align*}
		The inequality above holds since $\fint_{(\tau,\delta)}|f_n|\le\fint_{(0,\delta)}|f_n|.$  By combining these two cases, to compute $M^\alpha f_n(x)$, it suffices to take the supremum of the right hand side of the above expression over $1\le\delta<2.$  For $n>1/\alpha,$ the right hand side is increasing in $\delta,$ and so for such $n$, we have that 
		\[
		M^\alpha f_n(x)=\sup_{1\le\delta<2}\frac{1}{2^\alpha\delta^{1-\alpha}}\left(n\delta-(\delta-1)^2/2\right)=n-1/4.
		\]
		Hence for sufficiently large $n$, $M^\alpha f_n$ is constant on $(0,1)$, and so it follows that 
		\[
		\|M^\alpha f_n-M^\alpha f\|_{\BMO(X)}\ge\cO(M^\alpha f_n-M^\alpha f,(0,1))=\cO(M^\alpha f,(0,1))>0,
		\]
		where the last inequality holds since $M^\alpha f$ is not constant almost everywhere on $(0,1).$
		Therefore, $M^\alpha f_n$ does not converge to $M^\alpha f$ in $\BMO(X),$ and so $M^\alpha$ is not continuous on $\VMO(X).$
		
	\end{example}
	
	{\bf Acknowledgments.} The authors wish to thank Galia Dafni and Nageswari Shanmugalingam for valuable conversations related to this project and for their careful reading of the manuscript.

	\medskip
	
	\noindent Address:\\
	
	\vspace{-0.3cm}
	
	\noindent R.G.: Department of Mathematical Sciences, P.O. Box 210025, University of
	Cincinnati, Cincinnati, OH 45221--0025, U.S.A. \\
	\noindent E-mail: {\tt ryan.gibara@gmail.com}\\
	
	\vspace{-0.3cm}
	
	\noindent J.K.: Department of Mathematical Sciences, P.O. Box 210025, University of
	Cincinnati, Cincinnati, OH 45221--0025, U.S.A. \\
	\noindent E-mail: {\tt klinejp@mail.uc.edu}

\end{document}